\numberwithin{equation}{section}
\theoremstyle{plain}
\newtheorem{lemma}{Lemma}[section]
\newtheorem{theorem}[lemma]{Theorem}
\newtheorem{proposition}[lemma]{Proposition}
\newtheorem{corollary}[lemma]{Corollary}
\newtheorem{question}[lemma]{Question}
\theoremstyle{definition} 
\newtheorem{definition}[lemma]{Definition}
\newtheorem{example}[lemma]{Example}
\newtheorem{notation}[lemma]{Notation}
\newtheorem{remark}[lemma]{Remark} 
\renewcommand{\dim}{\operatorname{dim}}
\newcommand{\level}{\operatorname{level}}
\newcommand{\Ext}{\operatorname{Ext}}
\newcommand{\hh}{\operatorname{H}} 
\newcommand{\zz}{\operatorname{Z}}
\newcommand{\bb}{\operatorname{B}}
\newcommand{\cc}{\operatorname{C}}
\newcommand{\Hom}{\operatorname{Hom}}
\newcommand{\susp}{\mathsf{\Sigma}}
\newcommand{\im}{\operatorname{im}}
\newcommand{\coker}{\operatorname{coker}}
\renewcommand{\le}{\leqslant}
\renewcommand{\ge}{\geqslant}
\newcommand{\pd}{\operatorname{pd}}
\newcommand{\Gpd}{\operatorname{Gpd}}
\newcommand{\RHom}{\operatorname{\mathsf{R}Hom}}
\newcommand{\sfC}{\mathsf C} 
\newcommand{\sfD}{\mathsf D} 
\newcommand{\sfG}{\mathsf G}
\newcommand{\sfT}{\mathsf T}
\newcommand{\sfS}{\mathsf S}
\newcommand{\sfA}{\mathsf A}
\newcommand{\sfB}{\mathsf B}
\newcommand{\sfP}{\mathsf P}
\newcommand{\thick}{\operatorname{thick}}
\newcommand{\add}{\operatorname{add}}
\newcommand{\smd}{\operatorname{smd}}
\begin{document}

\title[Level and Gorenstein projective dimension]{Level and Gorenstein projective dimension}

\author[L. Awadalla]{Laila Awadalla}

\address{University of Nebraska-Lincoln, Lincoln, NE 68588, U.S.A.}
\email{laila.awadalla@huskers.unl.edu}

\author[T.\ Marley]{Thomas Marley}

\address{University of Nebraska-Lincoln, Lincoln, NE 68588, U.S.A.}
\email{tmarley1@unl.edu}

\urladdr{http://www.math.unl.edu/tom-marley}

\date{\today}

\bibliographystyle{amsplain}

\keywords{Level, derived category, ghost lemma, Gorenstein projective}

\subjclass[2010]{13D05; 13D07, 13D09}

\begin{abstract}  We investigate the relationship between the  level of  a bounded complex over  a commutative ring with respect to the class of Gorenstein projective modules and other invariants of the complex or ring, such as projective dimension, Gorenstein projective dimension, and Krull dimension.  The results build upon work done by J.\ Christensen \cite{JChr},  H.\ Altmann et al. \cite{AGMSV},  and Avramov et al. \cite{ABIM} for levels with respect to the class of finitely generated projective modules. \end{abstract}

\maketitle

\section{Introduction}  The concept of {\it level} in a triangulated category, first defined by Avramov, Buchweitz, Iyengar, and Miller \cite{ABIM}, is a measure of how many mapping cones (equivalently, extensions) are needed to build an object from a collection of other objects, up to suspensions, finite sums, and retractions.  This concept has its origins in the works of Beilinson, Bernstein, and Deligne \cite{BBD}, J.\ Christensen \cite{JChr},  Bondal and Van den Bergh \cite{BV}, Rouquier \cite{R}, and others.    In particular, the concept of level is implicit in Rouquier's definition of dimension of a triangulated category.  

In the case of the bounded derived category of a commutative Noetherian local ring, levels have been used to establish, among other things, a lower bound on the sum of the Loewy lengths of the homology modules of (non-acyclic) perfect complexes (\cite[Theorem 10.1]{ABIM}).  In this context, it is interesting to compare the level of an object  with other more familiar homological invariants.  
For instance, the level of a finitely generated module (considered as a complex concentrated in degree zero) with respect to the ring is one more than the projective dimension of the module (\cite{JChr}; see also \cite[Cor. 2.2]{AGMSV}).   On the other hand, since the level of an object and its suspension are the same, uniform bounds on levels may exist in situations where there are no such bounds for homological dimensions.  For example, a local ring is regular if and only if the level with respect to the ring of any bounded complex with finite homology is at most one more than the dimension of the ring \cite[Theorem 5.5]{ABIM}, whereas the projective dimensions of such complexes over a regular local ring may be arbitrarily large.

In this paper, we study the levels of complexes with respect to the class ${\sfG}$ of Gorenstein projective modules in the bounded derived category of a commutative ring $R$.  
It is straightforward to prove that if $M$ is a bounded complex (by which we always mean homologically bounded) which is not acyclic then 
$$\level_R^{\sfG}M\le \Gpd_RM-\inf M +1,$$
where $\Gpd_RM$ is the Gorenstein projective dimension of $M$ and $\inf M=\inf \{n\mid \hh_n(M)\neq 0\}$ (Corollary \ref{easy-inequality}).  Typically, lower bounds for the level of an object are much harder to obtain.  One of our main results (Theorem \ref{main-theorem}) is the following:

\begin{theorem} \label{theoremA} Let $M$ be a bounded below complex which is not acyclic.  Then $$\level_R^{\sfG} M\ge \Gpd_R M - \sup M+1.$$
\end{theorem}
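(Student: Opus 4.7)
The plan is to construct a composable sequence of $\Gpd_R M - \sup M$ $\sfG$-ghost maps in $\sfD(R)$ starting at $M$ whose composition is nonzero, and then invoke the ghost lemma of \cite{ABIM} to conclude $\level^{\sfG}_R M \ge \Gpd_R M - \sup M + 1$. Since both $\level^{\sfG}_R M$ and $\Gpd_R M - \sup M$ are invariant under suspension (using $\Gpd_R \susp^n M = \Gpd_R M + n$ and $\sup \susp^n M = \sup M + n$), I first suspend to reduce to the case $\sup M = 0$, so the inequality becomes $\level^{\sfG}_R M \ge \Gpd_R M + 1$. The case $\Gpd_R M = 0$ is immediate, and $\Gpd_R M = \infty$ can be recovered from the finite case combined with Corollary~\ref{easy-inequality}, so I assume $g := \Gpd_R M$ is finite and positive.

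The key construction produces a resolution of $M$ whose successive syzygies all have \emph{finite} projective dimension. Via an Auslander-Buchweitz-style Gorenstein projective approximation (extended to bounded below complexes of finite $\Gpd$), there is a triangle
\[
K \lra H \lra M \xrightarrow{\alpha_0} \susp K
\]
in $\sfD(R)$ with $H$ quasi-isomorphic to a Gorenstein projective module and $\pd_R K = g - 1$. Splicing with a minimal projective resolution $0 \to Q_{g-1} \to \cdots \to Q_0 \to K \to 0$ produces a length-$g$ extension whose successive syzygies $K = Y_0,\ \Omega^1 K = Y_1,\ \ldots,\ Q_{g-1} = Y_{g-1}$ satisfy $\pd_R Y_i = g - 1 - i$. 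The connecting maps of the constituent short exact sequences assemble into a composition
\[
M \xrightarrow{\alpha_0} \susp K \xrightarrow{\susp \alpha_1} \susp^{2} Y_1 \lra \cdots \lra \susp^{g} Q_{g-1}
\]
of $g$ maps. Each $\alpha_i$ is a $\sfG$-ghost: for any $G' \in \sfG$ we have $\Ext^{j}_R(G', Y_i) = 0$ for all $j \ge 1$, since $Y_i$ has finite projective dimension and Gorenstein projective modules have vanishing positive Ext into finite-pd modules. Note that this is precisely why Auslander-Buchweitz is needed in place of a naive projective resolution of $M$: intermediate syzygies of $M$ itself may have infinite projective dimension.

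The hard part is to verify that the total composition is nonzero in $\Ext^g_R(M, Q_{g-1})$. The composition equals the Yoneda product $\alpha_0 \cdot \beta$, where $\beta \in \Ext^{g-1}_R(K, Q_{g-1})$ is the Yoneda class of the chosen minimal projective resolution of $K$; minimality forces $\beta \ne 0$. Applying $\RHom_R(-, Q_{g-1})$ to the Auslander-Buchweitz triangle and using the vanishing $\Ext^{\ge 1}_R(H, Q_{g-1}) = 0$ (a consequence of $H$ being Gorenstein projective and $Q_{g-1}$ having finite projective dimension), multiplication by $\alpha_0$ induces an isomorphism $\Ext^{g-1}_R(K, Q_{g-1}) \xrightarrow{\cong} \Ext^g_R(M, Q_{g-1})$. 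Therefore $\alpha_0 \cdot \beta \ne 0$ and the ghost lemma yields $\level^{\sfG}_R M \ge g + 1$ as desired.
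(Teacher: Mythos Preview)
Your overall strategy---produce $g-\sup M$ composable $\sfG$-ghost maps out of $M$ whose composite is nonzero and apply the Ghost Lemma---is exactly the paper's strategy, and your reduction to $\sup M=0$ is harmless. The ghost verification is also fine. The gap is in the Auslander--Buchweitz step.

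You assert a triangle $K\to H\to M\xrightarrow{\alpha_0}\susp K$ with $H$ quasi-isomorphic to a Gorenstein projective \emph{module} and $K$ a \emph{module} of projective dimension $g-1$, and then splice with a projective resolution $0\to Q_{g-1}\to\cdots\to Q_0\to K\to 0$. This is available when $M$ is a module, but not for a general complex with $\sup M=0$ and $\inf M<0$. Indeed, if $H$ is concentrated in degree $0$, the long exact homology sequence of the triangle gives, for each $i<0$, an injection $\hh_i(M)\hookrightarrow \hh_{i-1}(K)$; thus $K$ is forced to have nonzero homology in degrees below $-1$ and cannot be a module. Your subsequent steps---the projective resolution of $K$ written as a module resolution, and especially the vanishing $\Ext^{\ge 1}_R(H,Q_{g-1})=0$ used to conclude that multiplication by $\alpha_0$ is an isomorphism---both rely on $H$ and $K$ being modules, so the argument that the composite is nonzero collapses in the complex case. (A secondary point: ``minimal'' resolutions need not exist over a general commutative ring, though this is easily repaired by taking any projective resolution of length $\pd_R K$; nonvanishing of its Yoneda class follows from $\pd_R K=g-1$.)

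The approximation that \emph{does} exist for complexes comes from Theorem~\ref{CI-thm} with the Gorenstein projective term placed in degree $n=\sup M$: one gets a triangle $K\to X_{\le 0}\to M\to\susp K$ with $K=\cc_1(X)$ a module of projective dimension $g-1$, but $X_{\le 0}$ is now a bounded complex of Gorenstein projectives, not a single module. With this $H=X_{\le 0}$, the needed vanishing $\Ext^{g-1}_R(H,Q_{g-1})=0$ can fail (the hyper-Ext is $\hh^{g-1}$ of $\Hom_R(H,Q_{g-1})$, which lives in cohomological degrees $0$ through $-\inf M$). Showing the composite is nonzero therefore requires a different argument; this is precisely what the paper's Lemmas~\ref{tricky-lemma} and~\ref{qi} accomplish, by showing that if the composite were null-homotopic then the top differential of $X$ would split, contradicting $\Gpd_R M=g$. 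Your approach does recover Corollary~\ref{module-case} cleanly, but the passage to complexes is exactly the nontrivial content of Theorem~\ref{main-theorem}.
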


As an immediate consequence, we obtain the following generalization of \cite[Proposition 4.5]{JChr} and \cite[Cor 2.2]{AGMSV}:

\begin{corollary} \label{CorA} Let $M$ be a nonzero $R$-module of finite Gorenstein projective dimension.   Then 
$$\level_R^{\sfG}M=\Gpd_RM+1.$$
\end{corollary}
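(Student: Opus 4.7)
The plan is essentially to invoke the two bounds already available, one from the ``easy'' direction recorded as Corollary \ref{easy-inequality} and the other from Theorem \ref{theoremA}, and observe that the hypotheses on $M$ force the two sides to coincide. So I would view the nonzero module $M$ as a complex concentrated in homological degree zero. Then $M$ is bounded and not acyclic, and moreover
\[
\inf M \;=\; \sup M \;=\; 0.
\]

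Next I would apply Corollary \ref{easy-inequality} to obtain the upper bound
\[
\level_R^{\sfG} M \;\le\; \Gpd_R M - \inf M + 1 \;=\; \Gpd_R M + 1,
\]
which is meaningful precisely because $\Gpd_R M$ is assumed finite. In the other direction, $M$ is bounded below and not acyclic, so Theorem \ref{theoremA} applies and yields
\[
\level_R^{\sfG} M \;\ge\; \Gpd_R M - \sup M + 1 \;=\; \Gpd_R M + 1.
\]
Combining the two inequalities gives the asserted equality.

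There is no real obstacle: once Theorem \ref{theoremA} is established, the corollary is purely a matter of specializing to the case $\inf M = \sup M = 0$ and checking that the hypotheses (boundedness, non-acyclicity, finite Gorenstein projective dimension) are satisfied by a nonzero module regarded as a complex in degree zero. The only mild care needed is to ensure that \emph{nonzero} (rather than merely bounded) is what guarantees $M$ is not acyclic as a complex, so that both Corollary \ref{easy-inequality} and Theorem \ref{theoremA} are genuinely applicable.
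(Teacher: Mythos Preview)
Your proposal is correct and matches the paper's own proof, which simply says the result follows readily from Theorem \ref{main-theorem} (i.e., Theorem \ref{theoremA}) and Corollary \ref{easy-inequality}. Your explicit identification of $\inf M=\sup M=0$ for a module and the care about non-acyclicity are exactly the points needed to make those two citations yield the equality.
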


As with the proofs of \cite[Proposition 4.5]{JChr} and \cite[Cor 2.2]{AGMSV}, our proof of Theorem \ref{theoremA} relies critically on an application of the ``Ghost Lemma'' (cf. \cite[Theorem 3]{Ke}).  However, the argument here is significantly more complicated, as maps between (hard) truncations of Gorenstein projective resolutions are not necessarily $\sfG$-ghost.  We are able to work around this difficulty using a result of L.\ Christensen and Iyengar  (\cite[Theorem 3.1]{CI}) demonstrating the existence of Gorenstein projective resolutions of a specific form, along with the aid of an elementary  ``splitting lemma'' (Lemma \ref{tricky-lemma}).
As a consequence, we are able to prove the following characterization of Gorenstein local rings (Corollary \ref{Gor-ring}):

\begin{corollary} Let $R$ be a local Noetherian ring with residue field $k$. The following conditions are equivalent:
\begin{enumerate}[(a)]
\item $R$ is Gorenstein;
\item $\level_R^{\sfG} k<\infty$;
\item $\level_R^{\sfG} k=\dim R+1$;
\item $\level_R^{\sfG}M\le \dim R +\sup M-\inf M+1$ for all non-acyclic bounded below complexes $M$.
\end{enumerate}
\end{corollary}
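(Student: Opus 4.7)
The plan is to establish the cycle $(a) \Rightarrow (c) \Rightarrow (b) \Rightarrow (a)$ and, separately, the pair $(a) \Rightarrow (d) \Rightarrow (b)$. The argument rests on the sandwich provided by Corollary~\ref{easy-inequality} and Theorem~\ref{theoremA}, combined with two standard facts about Gorenstein local rings: the Auslander--Bridger formula for Gorenstein projective dimension, and the characterization of Gorensteinness via finiteness of $\Gpd_R k$.

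For $(a) \Rightarrow (c)$ I would use that a Gorenstein local ring is Cohen--Macaulay, so the Auslander--Bridger formula yields $\Gpd_R k = \depth R - \depth k = \dim R$, and Corollary~\ref{CorA} then upgrades this to $\level_R^{\sfG} k = \dim R + 1$. The implication $(c) \Rightarrow (b)$ is trivial. For $(b) \Rightarrow (a)$ I would apply Theorem~\ref{theoremA} to the residue field $k$ (with $\sup k = 0$) to deduce $\Gpd_R k \le \level_R^{\sfG} k - 1 < \infty$; Gorensteinness of $R$ then follows from the standard fact that $\Gpd_R k < \infty$ forces $R$ to be Gorenstein, which one can prove directly by applying $\Hom_R(-,R)$ to a finite Gorenstein projective resolution $0 \to G_g \to \cdots \to G_0 \to k \to 0$ and using the vanishing $\Ext^{i}_R(G_j,R) = 0$ for all $i>0$ and $j$ to obtain $\Ext^{i}_R(k,R) = 0$ for $i>g$, whence $\id_R R < \infty$.

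For $(a) \Rightarrow (d)$, the plan is to combine Corollary~\ref{easy-inequality} with the bound
$$\Gpd_R M \le \dim R + \sup M$$
valid for every non-acyclic bounded below complex $M$ over a Gorenstein local ring of dimension $d$. This bound I would prove by induction on $\sup M - \inf M$. In the base case, $M \simeq \susp^t N$ for some nonzero module $N$ with $t = \sup M = \inf M$, and one uses $\Gpd_R \susp^t N = t + \Gpd_R N \le t + d$, since every module over a Gorenstein local ring of dimension $d$ has Gorenstein projective dimension at most $d$. For the inductive step with $t = \sup M > \inf M$, I would consider the truncation triangle
$$\susp^t H_t(M) \lra M \lra \tau_{<t} M \lra \susp^{t+1} H_t(M),$$
apply the subadditivity $\Gpd_R M \le \max\bigl(\Gpd_R \susp^t H_t(M),\,\Gpd_R \tau_{<t} M\bigr)$, and invoke the induction hypothesis on $\tau_{<t} M$, whose homological width has strictly decreased. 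Finally, $(d) \Rightarrow (b)$ is the specialization to $M = k$, which gives $\level_R^{\sfG} k \le \dim R + 1 < \infty$.

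I do not anticipate a serious technical obstruction: Theorem~\ref{theoremA} does the heavy lifting, and each of the remaining ingredients is well documented, e.g.\ in Christensen's monograph on Gorenstein dimensions. The only step requiring a bit of care is the triangle subadditivity for $\Gpd_R$ used in the induction for $(a) \Rightarrow (d)$, which must be verified at the level of complexes in the derived category rather than only for short exact sequences of modules; this is, however, routine once one chooses a bounded Gorenstein projective resolution of each vertex of the triangle and forms the mapping cone.
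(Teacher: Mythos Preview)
Your proposal is correct and follows essentially the same architecture as the paper's proof: the paper simply cites Theorem~\ref{Gor-classic} (the standard characterizations of Gorenstein local rings via $\Gpd_R k<\infty$, $\Gpd_R k=\dim R$, and the bound $\Gpd_R M\le \dim R+\sup M$) together with Corollary~\ref{easy-inequality}, Theorem~\ref{main-theorem}, and Corollary~\ref{module-case}, whereas you unpack and re-derive the content of Theorem~\ref{Gor-classic} by hand (the Auslander--Bridger formula, the implication $\Gpd_R k<\infty\Rightarrow R$ Gorenstein, and a truncation-triangle induction for the bound on $\Gpd_R M$). The core mechanism---sandwiching $\level_R^{\sfG}$ between the two $\Gpd_R$ bounds via Corollary~\ref{easy-inequality} and Theorem~\ref{theoremA}/Corollary~\ref{CorA}---is identical in both.
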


Finally, as mentioned above, it is known that a commutative Noetherian local ring $R$ is regular if and only if $\level_R^R M\le \dim R+1$ for every bounded complex $M$ over $R$ with finitely generated homology (\cite[Theorem 5.5]{ABIM}).  We show that a direct analogue of this result for Gorenstein rings and $\level_R^{\sfG}M$ in place of $\level^R_R M$ does not hold (Example \ref{eg}).  However, we are able to establish a global bound for arbitrary Gorenstein local rings (Theorem \ref{global-bound}), although we do not know if the bound is best possible:

\begin{theorem} \label{global-bound} Let $R$ be a Gorenstein local ring of dimension $d$ and $M$ a complex in $\sfD_b(R)$.  Then 
$$\level_R^{\sfG} M\le 2d+2.$$
\end{theorem}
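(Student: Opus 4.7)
The argument combines the easy upper bound from Corollary~\ref{easy-inequality} with a decomposition that exploits the Gorenstein structure of $R$. Recall that $\level_R^{\sfG}$ is subadditive on distinguished triangles: for $A \to M \to B \to \susp A$ one has $\level_R^{\sfG} M \le \level_R^{\sfG} A + \level_R^{\sfG} B$. This will be the driving mechanism: it suffices to exhibit $M$ as the middle term of a triangle whose outer vertices each have level at most $d+1$.

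The base input is that any module $N$ over a Gorenstein local ring of dimension $d$ satisfies $\Gpd_R N \le d$ (the Gorenstein analog of the Auslander--Buchsbaum--Bridger formula). Together with the easy inequality this gives $\level_R^{\sfG} \susp^n N \le d+1$ for any integer $n$. More generally, for any $M \in \sfD_b(R)$ one has $\Gpd_R M \le d + \sup M$ (since $\depth_R M \ge -\sup M$), so the easy inequality yields
\[
\level_R^{\sfG} M \le d + (\sup M - \inf M) + 1.
\]
Thus when the amplitude of $M$ satisfies $\sup M - \inf M \le d+1$, the bound $2d+2$ already follows, and no further work is needed.

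For complexes of larger amplitude, the plan is to produce a distinguished triangle
\[
A \to M \to B \to \susp A
\]
in $\sfD_b(R)$ with $\level_R^{\sfG} A \le d+1$ and $\level_R^{\sfG} B \le d+1$, so that subadditivity closes the argument. The construction would start from a Gorenstein projective resolution $G_\bullet \to M$ of the specific form produced by Christensen--Iyengar (\cite[Theorem~3.1]{CI}), which played the analogous role in the proof of Theorem~\ref{theoremA}. Brutal truncation at a carefully chosen degree splits this resolution into two pieces. Choosing the cutoff past $\sup M + d$ forces the tail to be quasi-isomorphic to a shifted syzygy module; because $R$ is Gorenstein of dimension $d$, sufficiently high syzygies stabilize to Gorenstein projective modules, so one vertex of the triangle is a shifted object of $\sfG$ and has level $1$. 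The remaining vertex is a bounded complex of Gorenstein projectives, whose level is controlled by a separate application of the easy inequality to the truncation—paired with the Gorenstein bound $\Gpd \le d$—so that the count of Gorenstein projectives appearing is uniformly at most $d+1$.

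\textbf{Main obstacle.} The delicate point is the large-amplitude case: showing that the bounded complex of Gorenstein projectives produced by truncation has level at most $d+1$ uniformly, independent of $\sup M - \inf M$. Naive truncation produces a complex whose length grows with the amplitude of $M$, and length alone would only give the weaker bound $d + (\sup M - \inf M)$. Overcoming this is exactly where the precise form of the Christensen--Iyengar resolutions used in Theorem~\ref{theoremA} must be invoked, together with the splitting lemma (Lemma~\ref{tricky-lemma}): these allow one to replace long stretches of the resolution by a single Gorenstein projective module, collapsing its contribution to level and achieving the uniform bound $d+1$ on each piece.
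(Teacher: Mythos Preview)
Your overall strategy---exhibit $M$ as the middle of a triangle whose outer terms each have $\sfG$-level at most $d+1$---is exactly the paper's. The gap is in how you propose to build the triangle in the large-amplitude case. Brutal truncation of a Christensen--Iyengar resolution $X$ of $M$ at any degree $k$ yields pieces $X_{\ge k}$ and $X_{<k}$, and you are right that for $k>\sup M$ the high piece is quasi-isomorphic to a shifted module (indeed $\susp^k\cc_k(X)$) with level at most $d+1$. But the low piece $X_{<k}$ is then a complex of Gorenstein projectives of length $k-\inf M$, and nothing you cite collapses its level below that length. The splitting lemma (Lemma~\ref{tricky-lemma}) was used in Theorem~\ref{main-theorem} to show that a particular row splits in order to derive a \emph{contradiction}; it does not furnish a mechanism for shortening a resolution. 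Likewise, the Christensen--Iyengar form only guarantees projectivity of all but one term, which does not reduce level: a bounded complex of projectives can have $\sfG$-level as large as its length (e.g.\ a Koszul complex on a system of parameters). So the ``main obstacle'' you flag is genuine, and the tools you invoke do not resolve it.

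The paper avoids resolutions altogether and uses a far simpler triangle. Replace $M$ by a bounded representative and let $Z$ and $B$ be its subcomplexes of cycles and boundaries. One checks that $M/Z\cong \susp B$ and that both $Z$ and $B$ have \emph{zero} differentials, hence are finite direct sums of shifted modules. Since every module over a Gorenstein local ring of dimension $d$ has $\Gpd\le d$, Corollary~\ref{easy-inequality} gives each such module $\sfG$-level at most $d+1$, and the max formula for direct sums (Proposition~\ref{level-facts}(6)) then bounds $\level_R^{\sfG}Z$ and $\level_R^{\sfG}B$ by $d+1$ uniformly, regardless of amplitude. Subadditivity on the triangle $Z\to M\to\susp B\to\susp Z$ finishes. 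The idea you were missing is that a complex with zero differential behaves, for level purposes, like a single module; the cycle/boundary sequence is the canonical way to reach such complexes from an arbitrary $M$.
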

\smallskip
\noindent {\bf Acknowledgement:}  The authors would like to thank Lars Christensen and Sri Iyengar for helpful discussions in the course of working on this project.

\section{Preliminaries}

Throughout this paper $R$ will denote a commutative ring with identity.

\begin{subsection}{Complexes and derived categories} We will work with complexes over $R$, which we grade homologically:

$$M:= \cdots \to M_{n+1}\xrightarrow{\partial_{n+1}} M_n\xrightarrow{\partial_n} M_{n-1} \to \cdots $$

For each $n$ we let $\zz_n(X):=\ker \partial_n$, $\bb_n(X):=\im \partial_{n+1}$, $\cc_n(X):=\coker \partial_{n+1}$, and $\hh_n(M)=\zz_n(M)/\bb_n(M)$.
We also set $\sup M:=\sup \{n\mid \hh_n(M)\neq 0\}$ and $\inf M:=\inf \{n\mid \hh_n(M)\neq 0\}$.

We use the notation $\sfD(R)$ to denote the derived category of $R$.  Similarly, we let $\sfD_+(R)$ (respectively, $\sfD_b(R)$) denote the full subcategory of $\sfD(R)$ consisting of all $R$-complexes $M$ such that $\inf M>-\infty$, (respectively, $\sup M<\infty$ and $\inf M>-\infty$).  We let $\sfD^f(R)$ denote the full subcategory of $\sfD(R)$ consisting of all complexes whose homology is finitely generated in each degree. The subcategories $\sfD^f_+(R)$ and $\sfD_b^f(R)$ are defined similarly.  We use the symbol $\simeq$ to denote isomorphism in derived categories.   For any $R$-complex $M$ we let $M^{\#}$ denote the $R$-complex which is equal  to $M$ as a graded $R$-module  but whose  differentials are all zero.  We refer the reader to \cite{AF} for any unexplained terminology or notation regarding complexes.

\end{subsection}

\begin{subsection}{Gorenstein projective dimension}  In this subsection we summarize the basic properties of Gorenstein projective modules and Gorenstein projective dimension for complexes.  We refer the reader to \cite{Chr} and \cite{CFH} for proofs and additional detail than what is given here.

\begin{definition} \label{Gpd-defn} A complex $P$ of projective $R$-modules is called {\it totally acyclic} if it is acyclic and $\Hom_R(P,L)$ is also acyclic for every projective $R$-module $L$.    An $R$-module $G$ is called {\it Gorenstein projective} if $G$ is isomorphic to the cokernel of some differential of a totally acyclic complex.   \end{definition}

\begin{example}  Any projective $R$-module $P$ is Gorenstein projective as the complex $0\to P\xrightarrow{1} P\to 0$ is totally acyclic.
\end{example}

\begin{example}  If $R$ is a zero-dimensional Gorenstein local ring then every $R$-module is Gorenstein projective.  This follows easily from the fact that the class of projective $R$-modules and the class of injective $R$-modules are the same.  
\end{example}

We will need the following result:

\begin{lemma} \label{vanishing} Suppose $G$ is a Gorenstein projective $R$-module.  Then $\Ext^i_R(G,M)=0$ for all $i>0$ and all $R$-modules $M$ such that $\pd_RM<\infty$.
\end{lemma}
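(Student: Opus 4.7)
The plan is to prove this in two stages: first handle the case when $M$ is projective (this is essentially built into the definition of Gorenstein projective), and then bootstrap up to arbitrary finite projective dimension by a standard dimension-shifting argument.

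First I would establish the base case, that $\Ext^i_R(G,L)=0$ for all $i>0$ and all projective $R$-modules $L$. By Definition~\ref{Gpd-defn}, there is a totally acyclic complex $P$ of projectives such that $G\cong \coker\partial^P_{n+1}$ for some $n$; after reindexing I can assume $G\cong \coker(P_1\to P_0)$. Truncating $P$ to the right of degree $0$ produces an exact sequence
\[
\cdots \to P_2 \to P_1 \to P_0 \to G \to 0,
\]
which is a projective resolution of $G$. Applying $\Hom_R(-,L)$ and computing cohomology, the groups $\Ext^i_R(G,L)$ for $i\ge 1$ appear as cohomology of the complex $\Hom_R(P,L)$ in degrees $\le -1$; since $\Hom_R(P,L)$ is acyclic by the definition of total acyclicity, these all vanish.

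Next I would induct on $n=\pd_R M$. The case $n=0$ is the base case already done. For the inductive step, choose a short exact sequence $0\to K\to F\to M\to 0$ with $F$ projective and $\pd_R K=n-1$ (by truncating a projective resolution of $M$ after one step). The induced long exact sequence
\[
\cdots \to \Ext^i_R(G,F)\to \Ext^i_R(G,M)\to \Ext^{i+1}_R(G,K)\to \Ext^{i+1}_R(G,F)\to \cdots
\]
shows, via the induction hypothesis applied to $K$ and the base case applied to $F$, that $\Ext^i_R(G,M)=0$ for every $i\ge 1$.

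There isn't a real obstacle here; the only subtlety is making sure the totally acyclic complex really does furnish a projective resolution of $G$ in all nonnegative degrees, so that the vanishing of $\Hom_R(P,L)$ in the negative half of the complex translates correctly into vanishing of $\Ext^i_R(G,L)$ for $i>0$. Once that bookkeeping is done, the rest is formal.
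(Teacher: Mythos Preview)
Your argument is correct: the base case follows directly from the definition of total acyclicity (the nonnegative part of $P$ is a projective resolution of $G$, and for $i\ge 1$ the homology of $\Hom_R(P_{\ge 0},L)$ at the relevant spot coincides with that of the full acyclic complex $\Hom_R(P,L)$), and the inductive step is a routine dimension shift.

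The paper itself does not give a proof of this lemma; it simply cites \cite[Lemma~2.1]{CFH}. Your write-up supplies exactly the standard argument underlying that citation, so there is no substantive difference in approach---you have just unpacked what the paper leaves as a reference.
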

\begin{proof}  See \cite[Lemma 2.1]{CFH}.
\end{proof}

\begin{definition}(\cite[1.7]{CFH}) \label{Gpd} Let $M$ be a complex in $\sfD_+(R)$.  Let $\sfA$ be the class of all $R$-complexes $X$ such that $X^{\#}$ is bounded below and $X_i$ is Gorenstein projective for all $i$. The {\it Gorenstein projective dimension} of $M$  is defined by 
$$\Gpd_R M:= \inf \{ \sup X^{\#} \mid  X\in \sfA \text{ and }X \simeq M \text{ in }\sfD_+(R) \}.$$
\end{definition}

\begin{proposition} \label{Gpd-prop}  Let $M$ be a complex in $\sfD_+(R)$.  The following are equivalent:
\begin{enumerate}[(a)]
\item $\Gpd_R M\le n.$
\item $n\ge \sup M$ and for any (equivalently, some) $R$-complex $X\in \sfA$ with $X\simeq M$,  the module $\cc_n(X)$ is Gorenstein projective.
\end{enumerate}
\end{proposition}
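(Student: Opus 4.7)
I would prove the equivalence by first establishing (a) $\Leftrightarrow$ ``(b) with \emph{some} $X$'' via a direct truncation argument, and then showing that the cokernel condition is independent of the choice of $X \in \sfA$ with $X \simeq M$.

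For (a) $\Rightarrow$ ``some of (b)'': Definition~\ref{Gpd} directly gives $Z \in \sfA$ with $Z \simeq M$ and $\sup Z^\# \le n$, so $Z_i = 0$ for $i > n$. Then $\cc_n(Z) = Z_n$ is Gorenstein projective, and $\sup M \le n$. For the reverse, given $X \in \sfA$ with $X \simeq M$, $n \ge \sup M$, and $\cc_n(X) \in \sfG$, form the soft truncation $\tau_{\le n}X$ by replacing $X_n$ with $\cc_n(X)$ and killing degrees above $n$. Since $\hh_i(X) = 0$ for $i > n$, the natural surjection $X \to \tau_{\le n}X$ is a quasi-isomorphism; $\tau_{\le n}X$ lies in $\sfA$, and $\sup (\tau_{\le n}X)^\# \le n$ forces $\Gpd_R M \le n$.

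For ``some $\Leftrightarrow$ any'': The implication ``any $\Rightarrow$ some'' is trivial. For the converse, fix a semi-projective resolution $P \xra{\simeq} M$, which lies in $\sfA$ since projectives are Gorenstein projective. Given any $Y \in \sfA$ with $Y \simeq M$, semi-projectivity of $P$ produces a quasi-isomorphism $f \colon P \to Y$. The mapping cone $\operatorname{cone}(f)$ is an acyclic, bounded below complex of Gorenstein projective modules, so every cokernel $\cc_i(\operatorname{cone}(f))$ is Gorenstein projective by a bottom-up induction using closure of $\sfG$ under kernels of surjections onto Gorenstein projectives. Repeating this construction with $Y$ replaced by the bounded witness $Z$ from (a), a direct computation in degrees above $n$ shows that $\cc_j(P)$ is Gorenstein projective for every $j \ge n$. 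Then, for $n \ge \sup M$, the short exact sequence
\[
0 \to \cc_{n+1}(Y) \to \cc_{n+1}(\operatorname{cone}(f)) \to \cc_n(P) \to 0,
\]
extracted from the cone, exhibits $\cc_{n+1}(Y)$ as the kernel of a surjection of Gorenstein projectives, hence itself Gorenstein projective. Combining this with the short exact sequences $0 \to \cc_{n+1}(Y) \to Y_n \to \cc_n(Y) \to 0$ (valid since $\hh_{n+1}(Y)=0$) and $0 \to \hh_n(M) \to \cc_n(Y) \to \im \partial_n^Y \to 0$, and invoking closure of $\sfG$ under extensions, one concludes $\cc_n(Y)$ is Gorenstein projective.

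The main obstacle is the asymmetry of the closure properties of $\sfG$: it is closed under extensions, direct summands, and kernels of surjections onto Gorenstein projectives, but \emph{not} under arbitrary cokernels. This forces the mapping-cone sequences to be extracted at precisely the right degree, and the edge case $n = \sup M$ (where the naive short exact sequence involving $\cc_n(P)$ degenerates) requires separately verifying that the homology $\hh_n(M)$ and the image $\im \partial_n^Y$ are Gorenstein projective before closure under extensions can be invoked; this is done by exploiting the bounded witness $Z$ and tracing through its structure below the top degree.
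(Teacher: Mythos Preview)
The paper does not prove this proposition; it simply cites \cite[Theorem 3.1]{CFH}. Your attempt at a direct proof is largely on the right track, but the final step has a genuine gap.

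Your argument correctly establishes that all cokernels of the acyclic cone $\operatorname{cone}(f)$ are Gorenstein projective, and that $\cc_j(P)$ is Gorenstein projective for $j\ge n$ (via the bounded witness $Z$, where $\cc_{j+1}(Z)=0$ forces $\cc_j(P)\cong\cc_{j+1}(\operatorname{cone}(P\to Z))$). The short exact sequence $0\to \cc_{n+1}(Y)\to \cc_{n+1}(\operatorname{cone}(f))\to \cc_n(P)\to 0$ is also valid for $n\ge\sup M$, so $\cc_{n+1}(Y)\in\sfG$. The problem is the last move: you propose to obtain $\cc_n(Y)\in\sfG$ from the sequence $0\to \hh_n(M)\to \cc_n(Y)\to \im\partial_n^Y\to 0$ via closure under extensions, asserting that $\hh_n(M)$ and $\im\partial_n^Y$ can be shown Gorenstein projective ``by exploiting the bounded witness $Z$.'' This is false in general. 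Over $R=k[[x,y,z]]$ (where $\sfG$ consists exactly of free modules), take $M$ to be the complex $R^3\xrightarrow{(x,y,z)}R$ concentrated in degrees $1,0$. Then $\Gpd_R M=1=\sup M$, yet $\hh_1(M)=\Omega^2 k$ has projective dimension $1$ and is not free, and $\im\partial_1^M=\fm$ is likewise not free. So neither outer term of your sequence lies in $\sfG$, and closure under extensions gives nothing---even though $\cc_1(M)=R^3$ is certainly Gorenstein projective.

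The repair is to combine the bottom-up and top-down information in a single bounded cone. Pass to the soft truncations $P'=\tau_{\le n}P$ and $Y'=\tau_{\le n}Y$; since $n\ge\sup M$, the induced map $f'\colon P'\to Y'$ is still a quasi-isomorphism, and $C'=\operatorname{cone}(f')$ is bounded and acyclic with $C'_{n+1}=\cc_n(P)$, $C'_n=\cc_n(Y)\oplus P_{n-1}$, and $C'_i=Y_i\oplus P_{i-1}\in\sfG$ for $i<n$. Induction from the bottom (using closure of $\sfG$ under kernels of surjections) gives $Z_{n-1}(C')\in\sfG$; from the top, $C'_{n+2}=0$ and acyclicity force $Z_n(C')\cong C'_{n+1}=\cc_n(P)\in\sfG$. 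The exact sequence
\[
0\longrightarrow Z_n(C')\longrightarrow C'_n\longrightarrow Z_{n-1}(C')\longrightarrow 0
\]
then exhibits $C'_n=\cc_n(Y)\oplus P_{n-1}$ as an extension of Gorenstein projectives, whence $\cc_n(Y)\in\sfG$ as a direct summand.
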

\begin{proof}
See \cite[Theorem 3.1]{CFH}.
\end{proof}

\begin{proposition} \label{2-of-3} Let $X\to Y\to Z\to \susp X$ be an exact triangle in $\sfD_+(R)$.  If any two of $X$, $Y$, and $Z$ have finite Gorenstein projective dimension then so does the third.
\end{proposition}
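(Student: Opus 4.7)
The plan is to reduce the three two-out-of-three assertions to a single one via the rotation symmetry of the exact triangle, and then to construct a bounded Gorenstein projective resolution of the remaining vertex as a mapping cone.

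\emph{Rotation-reduction.} Each of the rotations $\susp^{-1}Z\to X\to Y\to Z$ and $Y\to Z\to \susp X\to \susp Y$ is again an exact triangle in $\sfD_+(R)$, because $\inf\susp^{\pm 1}M = \inf M \pm 1 > -\infty$ whenever $\inf M > -\infty$. Moreover, suspending a bounded complex in $\sfA$ produces another complex in $\sfA$ with $\sup$ shifted by the same amount, so $\Gpd_R\susp^n M = \Gpd_R M + n$. Finiteness of $\Gpd_R$ is therefore rotation-invariant, and the three cases collapse to the single assertion: if $\Gpd_R X,\Gpd_R Y<\infty$, then $\Gpd_R Z<\infty$.

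\emph{The main case.} Using Proposition \ref{Gpd-prop}, choose bounded complexes $\widetilde X,\widetilde Y\in\sfA$ of Gorenstein projective modules with $\widetilde X\simeq X$ and $\widetilde Y\simeq Y$. If the derived morphism $X\to Y$ can be lifted to an honest chain map $\phi\colon\widetilde X\to\widetilde Y$, then $\operatorname{Cone}(\phi)$ is a bounded complex whose components are direct sums $\widetilde Y_i\oplus\widetilde X_{i-1}$ of Gorenstein projectives, hence Gorenstein projective themselves; this cone is quasi-isomorphic to $Z$ by the uniqueness of the third vertex in an exact triangle. Thus $Z$ admits a bounded representative in $\sfA$, and Definition \ref{Gpd} yields $\Gpd_R Z<\infty$.

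\emph{The main obstacle.} The technical heart is the lifting step: a bounded complex of Gorenstein projectives is not semi-projective in general, so not every morphism in $\sfD(R)$ out of it descends from a chain map. I would address this by first taking a semi-projective resolution $\pi\colon P\xrightarrow{\sim}\widetilde X$; the composite then produces an honest chain map $P\to\widetilde Y$ whose mapping cone $C$ is quasi-isomorphic to $Z$ and consists entirely of Gorenstein projectives (since projectives are in particular Gorenstein projective). The complex $C$ is bounded above but unbounded below, so to extract $\Gpd_R Z<\infty$ from it I would show that a sufficiently far-out syzygy of $C$ is Gorenstein projective, using Lemma \ref{vanishing} together with the finiteness of $\Gpd_R X$ to ``collapse'' the unbounded projective tail of $P$ into a single Gorenstein projective module; Proposition \ref{Gpd-prop}(b) applied to the resulting truncated complex then gives the conclusion. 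A cleaner alternative is to appeal to the resolution theorem \cite[Theorem 3.1]{CI} to produce bounded Gorenstein projective resolutions of $\widetilde X$ and $\widetilde Y$ that are already compatible with the map $X\to Y$, bypassing the general lifting issue entirely.
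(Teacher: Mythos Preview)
Your approach is genuinely different from the paper's. The paper does not construct anything: it simply observes that every exact triangle in $\sfD_+(R)$ is isomorphic to one arising from a short exact sequence of complexes, and then invokes \cite[Theorem 3.9]{V}, where Veliche proves the two-out-of-three property for Gorenstein projective dimension on short exact sequences (after checking that Veliche's definition agrees with Definition~\ref{Gpd} via Proposition~\ref{Gpd-prop} and \cite[Theorem 3.4]{V}). Your argument, by contrast, is a direct mapping-cone construction and is essentially self-contained.

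Your outline is sound, with one slip and one loose end. The slip: the cone $C=\operatorname{Cone}(P\to\widetilde Y)$ is bounded \emph{below} and potentially unbounded \emph{above}, not the other way around, since $P$ is a bounded-below semi-projective resolution of an object in $\sfD_+(R)$. This does not damage the argument; for $n>\sup\widetilde Y^{\#}$ one has $\cc_n(C)\cong\cc_{n-1}(P)$, and once $n-1\ge\Gpd_RX$ this cokernel is Gorenstein projective by Proposition~\ref{Gpd-prop} applied to the resolution $P\simeq X$, so the truncation of $C$ at height $n$ lies in $\sfA$ and witnesses $\Gpd_RZ<\infty$. Lemma~\ref{vanishing} is not actually needed here; Proposition~\ref{Gpd-prop} alone suffices. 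The loose end: your ``cleaner alternative'' via \cite[Theorem 3.1]{CI} does not obviously work, since that result produces a resolution of a single complex of a specified shape but says nothing about lifting a given derived-category morphism to a chain map between two such resolutions; the semi-projective route you described first is the reliable one.
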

\begin{proof}   This result follows from \cite[Theorem 3.9]{V}, since Gorenstein projective dimension is preserved under isomorphism in $\sfD_+(R)$ and any exact triangle is isomorphic to one induced by a short exact sequence of complexes.  (Also note that by Proposition \ref{Gpd-prop} and  \cite[Theorem 3.4]{V}, the definition of Gorenstein projective dimension given in \cite{V} agrees with Definition \ref{Gpd}.)
\end{proof}

\begin{theorem} \label{Gor-classic}  Let $R$ be a Noetherian local ring with residue field $k$.  The following are equivalent:
\begin{enumerate}[(a)]
\item $R$ is Gorenstein;
\item $\Gpd_R k<\infty$;
\item $\Gpd_R k=\dim R$;
\item $\Gpd_R M\le \sup M +\dim R$ for all nonzero complexes $M$ in $\sfD_+(R)$.
\end{enumerate}
\end{theorem}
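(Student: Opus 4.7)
The plan is to prove the cycle (a) $\Rightarrow$ (d) $\Rightarrow$ (c) $\Rightarrow$ (b) $\Rightarrow$ (a), combining standard results on Gorenstein projective dimension from \cite{Chr} and \cite{CFH} with the Auslander--Bridger formula.

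For (a) $\Rightarrow$ (d), I would invoke the classical fact that over a Gorenstein local ring $R$ of dimension $d$, every $R$-module has Gorenstein projective dimension at most $d$; equivalently, every $d$-th syzygy is Gorenstein projective. Given $M \in \sfD_+(R)$ with $s = \sup M$, take any projective resolution $P \xrightarrow{\simeq} M$ and form the hard truncation $X$ of $P$ at homological degree $s + d$. Then $X \in \sfA$ and $\cc_{s+d}(X)$ is a $d$-th syzygy, hence Gorenstein projective. Proposition \ref{Gpd-prop} then gives $\Gpd_R M \leq s + d$.

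The implications (d) $\Rightarrow$ (c) $\Rightarrow$ (b) are largely formal: taking $M = k$ in (d) gives $\Gpd_R k \leq \dim R < \infty$. To upgrade the inequality in (c) to an equality (and to verify the reverse direction (a) $\Rightarrow$ (c)), I would appeal to the Auslander--Bridger formula $\Gpd_R N = \depth R - \depth_R N$ valid for finitely generated modules $N$ of finite Gorenstein projective dimension; with $N = k$ and $R$ Gorenstein (so $\depth R = \dim R$), this yields $\Gpd_R k = \dim R$.

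The substantive step is (b) $\Rightarrow$ (a), the Gorenstein analogue of Auslander--Buchsbaum--Serre. Assuming $n := \Gpd_R k < \infty$, I would take a Gorenstein projective resolution $G \xrightarrow{\simeq} k$ of length $n$ and apply Lemma \ref{vanishing} (using that $\pd_R R = 0$) to compute $\Ext^i_R(k, R)$ from this resolution: the higher Ext's $\Ext^{>n}_R(G_j, R)$ all vanish, which forces $\Ext^i_R(k, R) = 0$ for all $i > n$ and hence $\id_R R \leq n$, so $R$ is Gorenstein. This is the main obstacle; once it is in hand, the remaining pieces of the theorem are just the syzygy criterion for Gorenstein projectivity over Gorenstein rings together with the Auslander--Bridger depth equality.
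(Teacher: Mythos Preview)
The paper does not actually prove this theorem; it simply cites \cite[4.4.5 and 1.4.9]{Chr} and \cite[Proposition 3.8]{CFH}, since the result is background material rather than a new contribution. Your proposal instead spells out the argument directly, and the ingredients you invoke --- the syzygy criterion over Gorenstein rings for (a)$\Rightarrow$(d), the Auslander--Bridger depth formula for (c), and the computation of $\Ext^i_R(k,R)$ via a finite Gorenstein projective resolution for (b)$\Rightarrow$(a) --- are exactly what those references contain. So your approach is not so much different from the paper's as it is an unpacking of the citations.

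One organizational slip: in your cycle (a)$\Rightarrow$(d)$\Rightarrow$(c)$\Rightarrow$(b)$\Rightarrow$(a), the step (d)$\Rightarrow$(c) does not go through as written. From (d) with $M=k$ you get $\Gpd_R k\le \dim R$, and Auslander--Bridger gives $\Gpd_R k=\depth R$; but turning this into $\Gpd_R k=\dim R$ requires $\depth R=\dim R$, i.e.\ that $R$ is Cohen--Macaulay, which you only know \emph{after} establishing (b)$\Rightarrow$(a). The fix is immediate: run (a)$\Rightarrow$(d)$\Rightarrow$(b)$\Rightarrow$(a) first, then derive (c) from (a) via Auslander--Bridger (and (c)$\Rightarrow$(b) is trivial). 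Your parenthetical about ``the reverse direction (a)$\Rightarrow$(c)'' suggests you already sense this, but the cycle as literally stated is circular at that point and should be restructured.
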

\begin{proof} The result follows from  \cite[4.4.5 and 1.4.9]{Chr} and \cite[Proposition 3.8]{CFH}.
\end{proof}

We will need the following result concerning complexes of finite Gorenstein projective dimension.

\begin{theorem} \label{CI-thm}
Let $M$ be a complex in $\sfD_+(R)$ of finite Gorenstein projective dimension and $n$ an integer such that $\inf M\le n\le \Gpd_RM$.  Then $M\simeq X$ for some complex $X$ in $\sfD_+(R)$ such that 
\begin{equation}\notag
X_i \text{ is }
\begin{cases}
\text{zero},& \text{ if } i>\Gpd_RM \text{ or } i<\inf M \\
\text{projective},& \text{ if } i\ne	 n;\\
\text{Gorenstein projective},& \text{ if } i=n.
\end{cases}
\end{equation}
Moreover, if $R$ is Noetherian and $M$ is in $\sfD^f_+(R)$ then each $X_i$ may be chosen to be finitely generated.
\end{theorem}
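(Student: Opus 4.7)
The plan is to handle the extremal case $n = g := \Gpd_R M$ first and then iteratively ``slide'' the Gorenstein projective position downward one degree at a time. For the base case $n = g$, I would take a projective resolution $P \to M$ with $P_i = 0$ for $i < s := \inf M$ (available since $M \in \sfD_+(R)$) and form the hard truncation $X$ with $X_i = P_i$ for $i < g$, $X_g = \cc_g(P)$, and $X_i = 0$ otherwise. Then $X \simeq M$, and Proposition \ref{Gpd-prop} guarantees that $\cc_g(P)$ is Gorenstein projective, so $X$ has the required shape.

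For the inductive step, suppose $X \simeq M$ already satisfies the conclusion for some $n$ with $s < n \le g$, and set $G = X_n$. By Definition \ref{Gpd-defn}, there is a short exact sequence $0 \to G \xrightarrow{\iota} P \xrightarrow{\pi} G'' \to 0$ with $P$ projective and $G''$ Gorenstein projective. Since $X_{n-1}$ is projective, Lemma \ref{vanishing} gives $\Ext^1_R(G'', X_{n-1}) = 0$, so the differential $d_n \colon G \to X_{n-1}$ extends to a map $\tilde d \colon P \to X_{n-1}$ with $\tilde d \circ \iota = d_n$. Define $X'$ by leaving $X_i$ unchanged for $i \notin \{n, n-1\}$, setting $X'_n = P$ and $X'_{n-1} = X_{n-1} \oplus G''$, and using the differentials $\iota \circ d_{n+1}$ into $P$, $p \mapsto (\tilde d(p), \pi(p))$ out of $P$, and $(x, y) \mapsto d_{n-1}(x) - \bar d(y)$ out of $X_{n-1} \oplus G''$, where $\bar d \colon G'' \to X_{n-2}$ is the unique map with $\bar d \circ \pi = d_{n-1} \circ \tilde d$ (it exists because $d_{n-1} \circ \tilde d \circ \iota = d_{n-1} \circ d_n = 0$). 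The obvious map $X \to X'$ (the identity outside degrees $n, n-1$, equal to $\iota$ in degree $n$ and to the first-summand inclusion in degree $n-1$) is then a chain map whose cokernel is the two-term complex $[G'' \xrightarrow{\id} G'']$ in degrees $n$ and $n-1$, which is acyclic, so $X \simeq X'$. Since $X'_{n-1}$ is the direct sum of a projective and a Gorenstein projective module it is Gorenstein projective, while $X'_n = P$ is projective and the range $[s,g]$ is preserved. Iterating from $g$ down to the prescribed $n$ produces the desired resolution.

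For the ``moreover'' statement, when $R$ is Noetherian and $M \in \sfD^f_+(R)$ each ingredient can be chosen finitely generated: the initial projective resolution can be built with finitely generated terms, and the short exact sequence $0 \to G \to P \to G'' \to 0$ used at every inductive step can be taken with $P$ and $G''$ finitely generated, since a finitely generated Gorenstein projective module arises as a cokernel in a totally acyclic complex of finitely generated projectives. The main technical obstacle is arranging the new differentials on $X'$ coherently so that $X \to X'$ is a quasi-isomorphism with the stated acyclic cokernel; both the existence of the extension $\tilde d$ and the factorization yielding $\bar d$ rest on the Ext-vanishing provided by Lemma \ref{vanishing}, which is what ultimately lets the Gorenstein projective module be pushed down the complex without enlarging the range $[\inf M, \Gpd_R M]$.
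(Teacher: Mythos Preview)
The paper does not give its own proof of this statement; it simply cites the construction in \cite[Theorem~3.1]{CI} together with \cite[Remark~3.5]{CI}.  Your write-up is therefore a self-contained argument standing in for that citation, and as such it is correct.

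The base case is the standard hard truncation of a projective resolution at degree $g$, with $\cc_g(P)$ Gorenstein projective by Proposition~\ref{Gpd-prop}.  In the inductive step your bookkeeping checks out: the extension $\tilde d$ exists because $\Ext^1_R(G'',X_{n-1})=0$ (Lemma~\ref{vanishing}, $X_{n-1}$ being projective), the factorization $\bar d$ exists since $d_{n-1}\tilde d$ kills $\ker\pi$, and the chain map $X\to X'$ is injective with acyclic cokernel $[G''\xrightarrow{\id}G'']$, hence a quasi-isomorphism.  One verification you omit but which is needed for $X'$ to be a complex is $d_{n-2}\bar d=0$; this follows from $d_{n-2}\bar d\,\pi=d_{n-2}d_{n-1}\tilde d=0$ and the surjectivity of $\pi$.

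The only place that warrants more than a passing remark is the ``moreover'' clause.  Your assertion that a finitely generated Gorenstein projective module over a Noetherian ring sits in a short exact sequence $0\to G\to P\to G''\to 0$ with $P$ and $G''$ finitely generated is true, but it is not immediate from Definition~\ref{Gpd-defn}: it amounts to the identification, for finitely generated modules over Noetherian rings, of ``Gorenstein projective'' with Auslander--Bridger G-dimension zero (equivalently, totally reflexive).  That equivalence is available in the references already in play (see \cite{Chr} or \cite{CFH}), so the step is legitimate, but it should be cited rather than treated as a triviality.
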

\begin{proof} The result follows from the construction given in the proof of  \cite[Theorem 3.1]{CI}. See also \cite[Remark 3.5]{CI}.
\end{proof}

\end{subsection}

\begin{subsection}{Levels in triangulated categories}  We adopt the notation and terminology of Section 2 of  \cite{ABIM} regarding levels in a triangulated category.   

Let $\sfT$ be a triangulated category.   A subcategory of $\sfT$ is called {\it strict} if it is closed under isomorphisms in $\sfT$.  A triangulated subcategory of $\sfT$ is called {\it thick} if it is strict and closed under direct summands.   Equivalently, a subcategory $\sfS$ of $\sfT$ is thick if it is full, strict, closed under direct summands, and in any exact triangle $L\to M\to N\to \susp L$, if two of $L$, $M$, $N$ are in $\sfS$, so is the third.  It is readily seen that the intersection of thick subcategories is again thick.  As examples note that $\sfD_+(R)$ is a thick subcategory of $\sfD(R)$, and $\sfD^f_+(R)$ is a thick subcategory of $\sfD_+(R)$.   

Let $\sfC$ be a nonempty collection of objects of $\sfT$.   The {\it thick closure} of $\sfC$, denoted $\thick_{\sfT}(\sfC)$, is defined to be the intersection of all thick subcategories of $\sfT$ containing $\sfC$. 
For example, $\thick_{\sfD(R)}(R)$ is the subcategory of $\sfD(R)$ consisting of the perfect complexes, i.e., complexes which are isomorphic in $\sfD(R)$ to a bounded complex of finitely generated projective $R$-modules.

We let $\add(\sfC)$ (respectively, $\add^{\susp}(\sfC)$) denote the the intersection of all strict and full subcategories of $\sfT$ which contain $\sfC$ and are closed under finite sums (respectively, closed under finite sums and suspensions).   We let $\smd(\sfC)$ denote the intersection of all strict and full subcategories of $\sfT$ which contain $\sfC$ and are closed under direct summands.    

Let $\sfA$ and $\sfB$ be strict and full subcategories of $\sfT$.  We define $\sfA\star \sfB$ to be the full subcategory of $\sfT$ whose objects consist of all objects $M$ of $\sfT$ such that there exists an exact triangle $L\to M\to N\to \susp L$ where $L\in \sfA$ and $N\in \sfB$.   Evidently, $\sfA\star \sfB$ is also strict.  

For a collection of objects $\sfC$ of $T$ and a nonnegative integer $n$, we define the  {\it $n$th thickening} of $\sfC$ in $\sfT$ to be the full subcategory of $\sfT$ whose objects are defined as follows:  

$$
\begin{aligned}
\thick_{\sfT}^0(\sfC)&=\{0\};\\
\thick_{\sfT}^1(\sfC)&=\smd(\add^{\susp}(\sfC));\\
\thick_{\sfT}^n(\sfC)&=\smd(\thick_{\sfT}^{n-1}(\sfC)\star \thick_{\sfT}^1(\sfC)) \text{ for $n\ge 2$}.
\end{aligned}
$$
It is straightforward to show that $\thick^n_{\sfT}(\sfC)\subseteq \thick^{n+1}_{\sfT}(\sfC)$ for all $n\ge 0$ and that $$\thick_{\sfT}(\sfC)=\bigcup_{n\ge 0} \thick^n_{\sfT}(\sfC).$$
For an object $M$ of $\sfT$  we define the $\sfC$-{\it level} of $M$ in $\sfT$ by
$$\level_{\sf T}^{\sfC}M:=\inf \{n\ge 0\mid M\in \thick^{n}_{\sfT}(\sfC)\}.$$
Note that $\level_{\sf T}^{\sfC}(M)<\infty$ if and only if $M\in \thick_{\sfT}(\sfC)$.

We list a few basic facts regarding levels:

\begin{proposition}\label{level-facts} {\rm (\cite[Lemma 2.4]{ABIM})} Let $\sfT$ be a triangulated category and $\sfC$ a nonempty collection of objects of $\sfT$.   Let $L$, $M$ and $N$ be objects of $\sfT$.
\begin{enumerate}
\item $\level_{\sf T}^{\sfC}M=\level_{\sf T}^{\sfC}N$ if $M$ is isomorphic to $N$.
\item $\level_{\sfT}^{\sfC}M=\level_{\sfT}^{\sfC}(\susp^s M)$ for all integers $s$.
\item $\level_{\sfT}^{\sfC}M=\level_{\sfT}^{{\sfD}}M$ where $\sfD=\smd(\add(\sfC))$.
\item $\level_{\sfT}^{\sfC}M=\level_{\sfS}^{\sfC}M$ for any thick subcategory $\sfS$ of $\sfT$ containing $\sfC$.
\item $\level_{\sfT}^{\sfC}V\le \level_{\sfT}^{\sfC}U+\level_{\sfT}^{\sfC}W$ whenever $U\to V\to W\to \susp U$ is an exact triangle in $\sfT$.
\item $\level_{\sfT}^{\sfC}(M\oplus N)=\max\{\level_{\sfT}^{\sfC}M, \level_{\sfT}^{\sfC}N\}$.
\end{enumerate}

\end{proposition}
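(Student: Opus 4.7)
The proposition collects six formal properties of the inductively defined filtration $\{\thick^n_\sfT(\sfC)\}_{n \ge 0}$, and all of them reduce to a few closure properties of each $\thick^n_\sfT(\sfC)$ that I would establish by a joint induction on $n$.

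My first step is to prove that for every $n \ge 0$ the subcategory $\thick^n_\sfT(\sfC)$ is strict, closed under suspension, and closed under finite direct sums. The cases $n = 0, 1$ follow directly from the definitions: the operations $\add^\susp$ and $\smd$ are closed under suspension and sums, and a summand of $X \oplus Y$ is tautologically a summand of an object in $\add^\susp(\sfC)$. For the inductive step on $n \ge 2$, given $V_1, V_2 \in \thick^n_\sfT(\sfC)$ with $V_i$ a summand of $V_i'$ fitting in a triangle $U_i \to V_i' \to W_i \to \susp U_i$ with $U_i \in \thick^{n-1}_\sfT(\sfC)$ and $W_i \in \thick^1_\sfT(\sfC)$, one forms the direct sum triangle and invokes the inductive closure of $\thick^{n-1}_\sfT(\sfC)$ and $\thick^1_\sfT(\sfC)$ under sums to place $V_1' \oplus V_2'$ in $\thick^{n-1}_\sfT(\sfC) \star \thick^1_\sfT(\sfC)$, whence $V_1 \oplus V_2 \in \thick^n_\sfT(\sfC)$. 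Closure under suspension is inherited because the suspension of a triangle is a triangle. Items (1), (2), and the upper bound in (6) follow at once; the lower bound in (6) is the built-in closure of $\thick^n_\sfT(\sfC)$ under summands.

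For (5), the key containment is $\thick^p_\sfT(\sfC) \star \thick^q_\sfT(\sfC) \subseteq \thick^{p+q}_\sfT(\sfC)$, which I would prove by induction on $q$. The base case $q=1$ is essentially the definition after applying $\smd$. For the inductive step, given a triangle $U \to V \to W \to \susp U$ with $U \in \thick^p_\sfT(\sfC)$ and $W \in \thick^q_\sfT(\sfC)$, write $W$ as a summand of $W'$ fitting in $W_0 \to W' \to W_1 \to \susp W_0$ with $W_0 \in \thick^{q-1}_\sfT(\sfC)$ and $W_1 \in \thick^1_\sfT(\sfC)$, and use the octahedral axiom to reassemble $V$ (or rather a companion containing it as a summand) as an object of $\thick^{p+q-1}_\sfT(\sfC) \star \thick^1_\sfT(\sfC)$. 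Combined with the inclusion $\thick^n_\sfT(\sfC) \subseteq \thick^{n+1}_\sfT(\sfC)$ (which follows from $0 \in \thick^1_\sfT(\sfC)$ via the split triangle), this yields the triangle inequality. Items (3) and (4) are softer: for (3), $\sfD = \smd(\add(\sfC)) \subseteq \thick^1_\sfT(\sfC)$, so the closure properties force $\thick^n_\sfT(\sfD) \subseteq \thick^n_\sfT(\sfC)$, while the reverse inclusion is immediate from $\sfC \subseteq \sfD$; for (4), the exact triangles in a thick subcategory $\sfS \supseteq \sfC$ are precisely the triangles of $\sfT$ with vertices in $\sfS$, so the inductive construction yields identical thickenings in $\sfS$ and in $\sfT$.

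The one step I expect to absorb the most care is the interaction of $\smd$ with $\star$ in proving $\thick^p_\sfT(\sfC) \star \thick^q_\sfT(\sfC) \subseteq \thick^{p+q}_\sfT(\sfC)$: a summand of an object in $\sfA \star \sfB$ does not a priori fit into a tidy triangle with vertices in $\sfA$ and $\sfB$, and extracting the needed triangle requires a careful application of the octahedral axiom. All other bookkeeping is routine induction on $n$.
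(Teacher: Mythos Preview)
Your proposal is correct and, for parts (1)--(4) and (6), essentially matches the paper's argument (which is itself terse: it just asserts the needed closure properties of each $\thick^n_\sfT(\sfC)$ without spelling out the induction you supply).

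The one genuine difference is in part (5). The paper does not argue by induction on $q$ or invoke the octahedral axiom directly; instead it cites Lemma~2.2.1 of Bondal--Van den Bergh \cite{BV}, which yields both the identification $\thick^m_\sfT(\sfC)=\smd\bigl(\add^\susp(\sfC)\star\cdots\star\add^\susp(\sfC)\bigr)$ ($m$ factors) and the inclusion $\thick^m_\sfT(\sfC)\star\thick^n_\sfT(\sfC)\subseteq\thick^{m+n}_\sfT(\sfC)$. Your direct octahedral argument is precisely an unpacking of that lemma: the ``careful step'' you flag at the end---replacing $W$ by the ambient $W'$ of which it is a summand, forming the companion triangle for $V\oplus W''$, and then applying octahedral to splice in the triangle for $W'$---is exactly the content of the Bondal--Van den Bergh result. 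So your route is more self-contained and shows you know where the work actually lies, while the paper's route is shorter by outsourcing that step to the literature.
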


\begin{proof} Parts (1) and (2) follow from the fact that $\thick^n_{\sfT}(\sfC)$ is closed under isomorphisms and suspensions for all $n$.  Part (3) follows from $\thick^1_{\sfT}(\sfC)=\thick^1_{\sfT}(\sfD)$.  For (4), we note that for any collection of objects $\sfB$ of $\sfT$ which are contained in $\sfS$, $\add^{\susp}(\sfB)$ and $\smd(\sfB)$ are contained in $\sfS$.  Hence, $\thick^n_{\sfT}(\sfC)=\thick^n_{\sfS}(\sfC)$ for all $n$.

For part (5), suppose $\level_{\sfT}^{\sfC}U=m$ and $\level_{\sfT}^{\sfC}W=n$.   Then $U\in \thick^m_{\sfT}(\sfC)$ and $W\in \thick^n_{\sfT}(\sfC)$.  Thus, $V\in \thick^m_{\sfT}(\sfC)\star \thick^n_{\sfT}(\sfC)$.
By the definition of $\thick^m_{\sfT}(\sfC)$ and Lemma 2.2.1 of \cite{BV} we have that $\thick^m_{\sfT}(\sfC)=\smd(\add^{\susp}(\sfC)\star \cdots \star \add^{\susp}(\sfC))$ ($m$ factors).  Another application of this lemma gives that
$\thick^m_{\sfT}(\sfC) \star \thick^n_{\sfT}(\sfC)\subseteq \smd(\thick^m_{\sfT}(\sfC) \star \thick^n_{\sfT}(\sfC))= \thick^{m+n}_{\sfT}(\sfC)$.  Hence, $\level_{\sfT}^{\sfC}V\le m+n$.

For part (6), let $m=\level_{\sfT}^{\sfC}M$, $n=\level_{\sfT}^{\sfC}N$, and $\ell=\level_{\sfT}^{\sfC}(M\oplus N)$.  Since thickenings are closed under direct summands,  $m$ and $n$ are both at most $\ell$. This proves one inequality.  If either $m$ or $n$ is infinite, the reverse inequality is trivial.  Otherwise, suppose $m$ and $n$ are both finite and assume $m\le n$.  Then $M$ and $N$ are both in $\thick^n_{\sfT}({\sfC})$ and hence $M\oplus N\in \thick^n_{\sfT}({\sfC})$.  Thus, $\ell\le n=\max\{m,n\}$.
\end{proof}

The next result follows readily from Proposition \ref{level-facts}:

\begin{corollary} \label{level-inequality} Let $\sfC$ be a collection of objects of $\sfD(R)$ and $M$ an $R$-complex. Then

$$\level_R^{\sfC}M\le \inf \left\{\sum_{i\in \mathbb Z} \level_R^{\sfC}L_i\mid L\simeq M \text{ in }\sfD(R) \right\}.$$

In particular, if $M\simeq L$ in $\sfD(R)$ and $L_i\in \thick^1_{\sfD(R)}(C)$ for all $i$, then

$$\level_R^{\sfC}M\le \sup L^{\#}- \inf L^{\#}+1.$$

\end{corollary}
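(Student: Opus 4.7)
The plan is to reduce the general bound to the single tool of subadditivity of level along exact triangles (Proposition \ref{level-facts}(5)), using the brutal truncation of a complex as a filtration whose associated graded pieces are the modules $L_i$ placed in a single degree.

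Fix a representative $L \simeq M$ in $\sfD(R)$. We may assume that only finitely many $L_i$ are nonzero, since otherwise $\sum_i \level_R^{\sfC} L_i = \infty$ and the inequality is vacuous. Choose integers $a \le b$ with $L_i = 0$ for $i \notin [a,b]$. For each $n$, let $L^{\le n}$ denote the brutal truncation with $(L^{\le n})_i = L_i$ for $i \le n$ and $(L^{\le n})_i = 0$ for $i > n$. The short exact sequence of complexes
$$0 \lra L^{\le n-1} \lra L^{\le n} \lra \susp^n L_n \lra 0,$$
where $\susp^n L_n$ denotes the complex concentrated in degree $n$, induces an exact triangle in $\sfD(R)$. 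Applying Proposition \ref{level-facts}(5) and Proposition \ref{level-facts}(2) yields
$$\level_R^{\sfC} L^{\le n} \le \level_R^{\sfC} L^{\le n-1} + \level_R^{\sfC}(\susp^n L_n) = \level_R^{\sfC} L^{\le n-1} + \level_R^{\sfC} L_n.$$

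Starting from $L^{\le a-1} = 0$, which has level $0$, induction on $n$ from $a$ to $b$ gives
$$\level_R^{\sfC} L = \level_R^{\sfC} L^{\le b} \le \sum_{n=a}^{b} \level_R^{\sfC} L_n = \sum_{i \in \bbZ} \level_R^{\sfC} L_i.$$
Since $L \simeq M$, Proposition \ref{level-facts}(1) gives $\level_R^{\sfC} M = \level_R^{\sfC} L$, and taking the infimum over all such $L$ proves the first inequality.

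For the second assertion, if each $L_i$ lies in $\thick^1_{\sfD(R)}(\sfC)$, then $\level_R^{\sfC} L_i \le 1$ whenever $L_i \ne 0$, and $\level_R^{\sfC} L_i = 0$ otherwise. The number of indices $i$ for which $L_i \ne 0$ is at most $\sup L^{\#} - \inf L^{\#} + 1$, so the previous inequality specializes to the stated bound. There is no real obstacle here beyond verifying that the brutal truncation produces the desired triangle; the conclusion then follows by bookkeeping with parts (1), (2), and (5) of Proposition \ref{level-facts}.
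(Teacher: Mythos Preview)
Your proof is correct and follows essentially the same approach as the paper: both filter $L$ by brutal truncations and apply Proposition \ref{level-facts}(5) to the resulting triangles. The only cosmetic difference is that you truncate from below (using $L^{\le n}$ and inducting upward), whereas the paper truncates from above, invoking the triangles $L_n\to L_{\ge n}\to L_{\ge n+1}\to \susp L_n$ and inducting downward.
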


\begin{proof}  The first inequality follows by applying part (5) of Lemma \ref{level-facts} to the exact triangles $L_n\to L_{\ge n}\to L_{\ge n+1}\to \susp L_n$ for each $n$.  The second follows from the first and observing that $\level_R^{\sfC}L_i\le 1$ for all $i$.

\end{proof}

\begin{notation} Let $\sfC$ be a collection of objects in $\sfD(R)$ and $M$ an $R$-complex.  We let $\level_R^{\sfC}M$ denote $\level_{\sfD(R)}^{\sfC}M$.  Then by part (4) of Proposition \ref{level-facts}, $\level_R^{\sfC}M=\level_{\sfT}^{\sfC}M$ for any thick subcategory $\sfT$ of $\sfD(R)$ containing $\sfC$.   In the case $\sfC$ consists of a single object, say $\sfC=\{A\}$, we denote $\level_R^{\sfC}M$ by $\level_R^AM$.  Note that $\level_R^R M=\level_R^{\widetilde{\sfP}} M$, where $\widetilde{\sfP}$ is the class of finitely generated projective modules, since $\widetilde{\sfP}=\smd(\add R)$ and by Proposition \ref{level-facts}(3). We'll reserve the symbol $\sfP$ to denote the class of all projective $R$-modules.  \end{notation}

The following two results follow easily from the definitions and Corollary \ref{level-inequality}:

\begin{corollary} \label{perfect}  The following hold for any $R$-complex $M$:\begin{enumerate}[(a)]
\item $\level_R^{\sfP}M\le 1$ if and only if $M$ is isomorphic in $\sfD(R)$ to a bounded complex of projective $R$-modules with zero differentials;
\item $\level_R^RM<\infty$ if and only if $M$ is a perfect complex, i.e., $M$ is isomorphic in $\sfD(R)$ to a bounded complex of finitely generated projective modules.
\end{enumerate}
\end{corollary}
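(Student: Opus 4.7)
The plan is to handle the two parts by unwinding the definition of $\thick^1_{\sfD(R)}$ and $\thick_{\sfD(R)}$ respectively. For part (a), the backward direction is essentially automatic: if $M\simeq L$ in $\sfD(R)$ with $L$ a bounded complex of projective $R$-modules with zero differentials, then $L$ decomposes in $\sfD(R)$ as a finite direct sum $\bigoplus_{i=a}^{b} \susp^{i} L_{i}$ with each $L_i$ projective. Thus $L\in \add^{\susp}(\sfP)$, hence $M\in \thick^{1}_{\sfD(R)}(\sfP)$ and $\level_R^{\sfP}M\le 1$.

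For the forward direction of (a), suppose $\level_R^{\sfP}M\le 1$. Then $M\in \smd(\add^{\susp}(\sfP))$, meaning $M$ is isomorphic in $\sfD(R)$ to a direct summand of some $X = \bigoplus_{i=a}^{b}\susp^{i}P_{i}$ with each $P_{i}$ projective. Taking homology of the splitting $X\simeq M\oplus N$ shows that $\hh_{i}(M)$ is a summand of $P_{i}$, hence projective for each $i\in[a,b]$ and zero otherwise. Let $L := \bigoplus_{i} \susp^{i}\hh_{i}(M)$, regarded as a complex with zero differentials; this is a bounded complex of projectives with zero differentials, so it is K-projective. The key remaining step is to produce a quasi-isomorphism $L\to M$. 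Replacing $M$ by a K-projective representative, we need to lift the identity on $\hh_*(M)$ to a chain map $L\to M$. This amounts to choosing splittings of the surjections $\zz_{i}(M)\twoheadrightarrow \hh_{i}(M)$ for each $i$; since $\hh_{i}(M)$ is projective, such splittings exist, and the resulting maps assemble into a chain map precisely because $L$ has zero differentials. This chain map induces the identity on homology, hence is a quasi-isomorphism, so $M\simeq L$ in $\sfD(R)$ as required. The main subtlety here is the passage from the abstract $\sfD(R)$-summand statement to an honest complex-level splitting—this is exactly where projectivity of the homology modules is used.

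Part (b) is essentially definitional once (a) is in place. By definition, $\level_R^{R}M<\infty$ if and only if $M\in \thick_{\sfD(R)}(R) = \bigcup_{n\ge 0}\thick^{n}_{\sfD(R)}(R)$. The observation made in the excerpt (immediately after the definition of thick closure) identifies $\thick_{\sfD(R)}(R)$ with the full subcategory of perfect complexes, i.e., complexes isomorphic in $\sfD(R)$ to a bounded complex of finitely generated projective $R$-modules. Thus the equivalence is immediate.
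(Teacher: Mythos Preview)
Your argument is correct. The paper itself gives no proof for this corollary beyond the remark that it ``follows easily from the definitions and Corollary~\ref{level-inequality}'', so what you have written is a legitimate unwinding of that claim. A couple of minor comments: in the forward direction of (a), the replacement of $M$ by a K-projective representative is unnecessary---once you know each $\hh_i(M)$ is projective, the splittings $\hh_i(M)\to \zz_i(M)\hookrightarrow M_i$ assemble into a chain map $L\to M$ for \emph{any} complex representing $M$, and a quasi-isomorphism is already an isomorphism in $\sfD(R)$. Also, part (b) does not actually rely on part (a); it follows directly from the identification $\thick_{\sfD(R)}(R)=\{\text{perfect complexes}\}$ stated earlier in the paper, so your phrase ``once (a) is in place'' is misleading but harmless.
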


\begin{corollary} \label{level-pd} Let $M$ be a nonzero complex in $\sfD_+(R)$.  Then
$$\level_R^{\sfP}M\le \pd_R M-\inf M+1.$$
Moreover, if $R$ is Noetherian and $M$ is in $\sfD^f_+(R)$ then
$$\level_R^RM\le \pd_R M-\inf M+1.$$
\end{corollary}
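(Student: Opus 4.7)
The plan is to reduce both inequalities to Corollary \ref{level-inequality} by producing an appropriate truncated projective resolution of $M$. If $\pd_R M = \infty$ the claim is vacuous, so assume $p := \pd_R M < \infty$, and set $s := \inf M$; note $s\le p$ since $M\ne 0$.

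The main step is to exhibit $L \simeq M$ in $\sfD(R)$ with $L_i$ projective for every $i$, $L_i = 0$ for $i < s$, and $L_i = 0$ for $i > p$. This is the standard truncation argument: starting from any projective resolution of $M$ in $\sfD_+(R)$ supported in degrees $\ge s$, one replaces the piece in degrees $\ge p+1$ by the kernel in degree $p$, which is projective precisely because $\pd_R M = p$; the resulting complex $L$ is a bounded complex of projectives quasi-isomorphic to $M$ with $\sup L^{\#} \le p$ and $\inf L^{\#} = s$. Once $L$ is in hand, each $L_i$ is a projective $R$-module, hence trivially lies in $\thick^1_{\sfD(R)}(\sfP)$, and Corollary \ref{level-inequality} gives
\[
\level_R^{\sfP} M \le \sup L^{\#} - \inf L^{\#} + 1 \le p - s + 1 = \pd_R M - \inf M + 1,
\]
which establishes the first inequality.

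For the second inequality, one runs the same argument in the Noetherian, finitely generated setting, producing $L \simeq M$ in $\sfD(R)$ with the same degree support as above and with each $L_i$ a finitely generated projective $R$-module; such a resolution exists since $M \in \sfD^f_+(R)$ and its minimal free (hence finitely generated projective) resolution has length $p - s$. Each such $L_i$ is a direct summand of a finite free module, so $L_i \in \smd(\add R) = \thick^1_{\sfD(R)}(R)$. Applying Corollary \ref{level-inequality} with $\sfC = \{R\}$ yields $\level_R^R M \le p - s + 1$, as desired.

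The only even mildly delicate point is the truncation step: one needs to know that, when $\pd_R M = p$, the $p$-th syzygy in any projective resolution starting at degree $\inf M$ is itself projective. This is standard but deserves a sentence of justification in the final write-up (it follows from the vanishing of $\Ext^{>p}$ together with the characterization of projective dimension via syzygies). Everything else is a direct bookkeeping application of the level machinery already set up in Corollary \ref{level-inequality} and Proposition \ref{level-facts}.
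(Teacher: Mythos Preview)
Your proposal is correct and follows essentially the same approach as the paper's proof: both reduce to Corollary~\ref{level-inequality} by producing a bounded projective (respectively, finitely generated projective) replacement of $M$ supported in degrees $\inf M$ through $\pd_R M$. The only minor quibble is your reference to a ``minimal free resolution'' in the second part, which presupposes a local ring; over an arbitrary Noetherian ring one should instead invoke the fact that any complex in $\sfD^f_+(R)$ admits a semi-projective replacement that is finitely generated in each degree, and then truncate as you describe.
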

\begin{proof} If $\pd_RM=\infty$ then both inequalities are trivial. Suppose $\pd_RM<\infty$ and let $P$ be a semi-projective replacement for $M$ such that $\sup P^{\#}=\pd_RM$.  We may furthermore assume $\inf P^{\#}= \inf M\ge \inf M^{\#}$.  Then by Corollary \ref{level-inequality} we have
$$
\begin{aligned}
\level_R^{\sfP}M&\le \sup P^{\#}-\inf P^{\#}+1\\
&\le \pd_RM-\inf M+1.
\end{aligned}
$$
For the second inequality, note that if $R$ is Noetherian and $H_n(M)$ is finitely generated for all $n$, we may assume the semi-projective replacement $P$ is finitely generated in each degree.  Hence, $\level_R^R(P_n)\le 1$ for all $n$.  The inequality again follows by Corollary \ref{level-inequality}.

\end{proof}

Essentially identical arguments yield analogous results for levels with respect to Gorenstein projective modules.  We let $\sfG$ (respectively, $\widetilde{\sfG}$) denote the class of all Gorenstein projective modules (respectively, finitely generated Gorenstein projective modules). 

\begin{corollary} \label{easy-inequality} Let $M$ be a nonzero complex in $\sfD_+(R)$. Then $$\level_R^{\sfG}M\le \Gpd_R M-\inf M+1.$$  Moreover, if $R$ is Noetherian and $M$ is a complex in $\sfD^f_+(R)$ then
$$\level_R^{\widetilde{\sfG}} M\le \Gpd_R M -\inf M +1.$$
\end{corollary}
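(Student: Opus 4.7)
The plan is to adapt the proof of Corollary \ref{level-pd} verbatim, with the semi-projective replacement there replaced by the special Gorenstein projective resolution supplied by Theorem \ref{CI-thm}. If $\Gpd_R M = \infty$ both inequalities are vacuous, so I would assume $g := \Gpd_R M$ is finite and set $m := \inf M$. By Proposition \ref{Gpd-prop}(b) we have $m \le \sup M \le g$, so $m$ is a legal choice for the parameter $n$ in Theorem \ref{CI-thm}.

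Invoking Theorem \ref{CI-thm} with $n = m$ produces an $R$-complex $X \simeq M$ in $\sfD_+(R)$ such that $X_i = 0$ for $i \notin [m, g]$, $X_i$ is projective for $i \ne m$, and $X_m$ is Gorenstein projective. Since projective modules are Gorenstein projective, every $X_i$ lies in $\sfG \subseteq \thick^1_{\sfD(R)}(\sfG)$. Applying the second assertion of Corollary \ref{level-inequality} to $X$ then gives
$$\level_R^{\sfG} M \le \sup X^{\#} - \inf X^{\#} + 1 \le g - m + 1 = \Gpd_R M - \inf M + 1,$$
which is the first inequality.

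For the ``moreover'' statement, assume $R$ is Noetherian and $M \in \sfD^f_+(R)$. The final clause of Theorem \ref{CI-thm} allows each $X_i$ to be chosen finitely generated, so every $X_i$ lies in $\widetilde{\sfG} \subseteq \thick^1_{\sfD(R)}(\widetilde{\sfG})$; repeating the display above with $\widetilde{\sfG}$ in place of $\sfG$ yields the second inequality.

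There is essentially no obstacle: the only delicate point is that Definition \ref{Gpd} only controls $\sup X^{\#}$, whereas the bound I want requires simultaneous control of $\inf X^{\#}$ as well. Theorem \ref{CI-thm} is engineered precisely to supply this simultaneous control, so once that result is in hand the argument is purely formal.
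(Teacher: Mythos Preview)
Your argument is correct and follows the same template the paper intends: produce a bounded complex of Gorenstein projectives quasi-isomorphic to $M$ with the right amplitude, then invoke Corollary~\ref{level-inequality}. The only difference is that you appeal to Theorem~\ref{CI-thm} to manufacture $X$, whereas the more direct analogue of the proof of Corollary~\ref{level-pd} would simply take a projective resolution $P$ of $M$ with $\inf P^{\#}=\inf M$ and truncate at degree $g$ using Proposition~\ref{Gpd-prop}(b), which already guarantees that $\cc_g(P)$ is Gorenstein projective (and finitely generated in the Noetherian, $\sfD^f_+$ situation). Invoking Theorem~\ref{CI-thm} is harmless but heavier than necessary here; that result is really needed later for Theorem~\ref{main-theorem}, where one must place the single non-projective term at $\sup M$ rather than merely obtain \emph{some} bounded Gorenstein projective resolution.
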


We also have an analogous result to Corollary \ref{perfect}:

\begin{proposition} \label{finite-level}  For any $R$-complex $M$  we have:
\begin{enumerate}[(a)]
\item $\level_R^{\sfG}(M)\le 1$ if and only if $M$ is isomorphic in $\sfD(R)$ to a bounded complex of Gorenstein projective modules with zero differentials;
\item $\level_R^{\sfG}(M)<\infty$ if and only if $M$ is in $\sfD_+(R)$ and  $\Gpd_R M<\infty$.
\end{enumerate}

\end{proposition}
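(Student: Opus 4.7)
The plan is to mirror the proof of Corollary \ref{perfect}, treating each implication of (a) and (b) separately. For (a), the reverse direction is immediate: a bounded complex of Gorenstein projectives with zero differentials lies in $\add^{\susp}(\sfG) \subseteq \thick^1_{\sfD(R)}(\sfG)$. For the forward direction, suppose $\level_R^{\sfG} M \le 1$, so $M$ is a direct summand in $\sfD(R)$ of some $N = \bigoplus_{i=1}^{k} \susp^{n_i} G_i$ with $G_i \in \sfG$. Since the class $\sfG$ is closed under direct summands, each $\hh_n(M)$ is a summand of $\hh_n(N)$ and thus lies in $\sfG$. To upgrade this to an isomorphism $M \simeq \bigoplus_n \susp^n \hh_n(M)$, I would induct on the number $r$ of degrees in which $N$ has nonzero homology. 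With $n$ the minimum such degree, the explicit direct-sum representative of $N$ isolates its lowest-degree summand, so the canonical truncation triangle
$$\tau_{\le n} N \to N \to \tau_{\ge n+1} N \to \susp \tau_{\le n} N$$
has zero connecting morphism. Applying truncation functorially to $M \oplus M' \simeq N$ decomposes this triangle as the direct sum of the corresponding truncation triangles for $M$ and $M'$; hence the connecting morphism for $M$ is a direct summand of zero, so $M \simeq \tau_{\le n} M \oplus \tau_{\ge n+1} M \simeq \susp^n \hh_n(M) \oplus \tau_{\ge n+1} M$. The complex $\tau_{\ge n+1} M$ is a direct summand of $\tau_{\ge n+1} N$, which has homology in $r-1$ degrees, so the induction closes.

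For (b), the $(\Leftarrow)$ direction is immediate from Corollary \ref{easy-inequality}. For $(\Rightarrow)$, I would introduce the subcategory
$$\sfS := \{X \in \sfD_+(R) \mid \Gpd_R X < \infty\}$$
and show it is a thick subcategory of $\sfD(R)$ containing $\sfG$. By the minimality of the thick closure, we then have $M \in \thick_{\sfD(R)}(\sfG) \subseteq \sfS$, which is exactly what is claimed. Closure of $\sfS$ under isomorphism and suspension is routine, closure under mapping cones is precisely Proposition \ref{2-of-3}, and closure under direct summands follows from the standard identity $\Gpd_R(X \oplus Y) = \max\{\Gpd_R X, \Gpd_R Y\}$ on $\sfD_+(R)$ (see \cite[Chap.~2]{Chr}).

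The main obstacle is the splitting argument in (a). In the analogous projective statement (Corollary \ref{perfect}(a)), the vanishing $\Ext^i_R(P, P') = 0$ for $i > 0$ forces the endomorphism ring of $\bigoplus \susp^{n_i} P_i$ to be block-diagonal, making the splitting of idempotents automatic termwise. In the Gorenstein projective case this vanishing fails, so one must instead invoke $t$-structure truncations and track how the splitting of the ambient triangle for $N$ descends by functoriality to its direct summand $M$.
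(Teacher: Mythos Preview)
Your argument is correct. For part (b), your thick-subcategory formulation is a clean repackaging of the paper's explicit induction on $\level_R^{\sfG} M$: both rest on Proposition~\ref{2-of-3} and the base case supplied by (a), so the two are essentially the same proof.

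For part (a), the paper writes only that the claim ``is clear, as $\sfG$ is closed under finite sums and direct summands as well as isomorphisms.'' You correctly identify that this, taken literally, yields only that each $\hh_n(M)$ is Gorenstein projective; one still needs the formality statement $M \simeq \bigoplus_n \susp^n \hh_n(M)$. Your truncation argument supplies this: by naturality of the $t$-structure truncations, the connecting morphism in the truncation triangle for $M$ factors through the (zero) connecting morphism for $N$, and induction finishes. That this formality is genuinely an issue is illustrated by Example~\ref{eg}, where the complex $F$ has Gorenstein projective homology in every degree yet $F \not\simeq \hh(F)$ and $\level_R^{\sfG} F = 2$. So your treatment of (a) is more detailed than the paper's one-line claim, while agreeing with it in spirit; what you gain is an explicit reason why the direct-summand-in-$\sfD(R)$ step does not run into the obstruction that Example~\ref{eg} exhibits.
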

\begin{proof}  Part (a) is clear, as $\sfG$ as closed under finite sums and direct summands as well as isomorphisms.   For part (b), one implication follows from Corollary \ref{easy-inequality}.   For the reverse implication, we use induction on $n=\level_R^{\sfG}(M)$, with the case $n\le 1$ following from part (a).   Suppose $n>1$.  Then there exist a complex $Y$ and an exact triangle $X\to Y\to Z\to \susp X$ such that $M$ is (isomorphic to) a direct summand of $Y$,  $\level_R^{\sfG} X\le n-1$, and $\level_R^{\sfG}(Z)\le 1$.  By the induction hypothesis, $X$ and $Z$ are in $\sfD_+(R)$ and have finite Gorenstein projective dimension.  The same then holds for $Y$ (and hence $M$) by Proposition \ref{2-of-3}.
\end{proof}

\begin{remark} We remark that if $R$ is Noetherian and $M$ is in $\sfD^f(R)$, a result similar to Proposition \ref{finite-level} holds with $\sfG$ replaced by $\widetilde{\sfG}$. \end{remark}

\end{subsection}

\begin{subsection}{Ghost maps and the Ghost Lemma}

\begin{definition} Let $\sfT$ be a triangulated category and $\sfC$ a collection of objects from $\sfT$.  A morphism $f:M\to N$ in $\sfT$ is called {\it $\sfC$-ghost} if 
$$\Hom_{\sfT}(\susp^n A, f):\Hom_{\sfT}(\susp^n A, M)\to \Hom_{\sfT}(\susp^n A, N)$$
is zero for every object $A$ of $\sfC$ and all $n$.   Similarly, $f$ is called {\it $\sfC$-coghost} if 
$$\Hom_{\sfT}(f, \susp^n A):\Hom_{\sfT}(N, \susp^n A)\to \Hom_{\sfT}(M, \susp^n A)$$
is zero for every object $A$ of $\sfC$ and all $n$. 
\end{definition}

\begin{remark} \label{ghost-remark1} Suppose $\sfC$ is a collection of objects from $\sfD(R)$. Note that the functor $\Hom_{\sfD(R)}(\susp^n A, -)$ is naturally equivalent to $\Ext^{-n}_{R}(A,-)$.  Thus, a morphism $f:M\to N$ in $\sfD(R)$ is $\sfC$-ghost if and only if for every complex $A$ in $\sfC$ the induced maps $\Ext^n_R(A,M)\to \Ext^{n}_R(A,N)$ are zero for all $n$.  Equivalently, $f:M\to N$ is $\sfC$-ghost if and only if the map $\RHom_R(A,M)\to \RHom_R(A,N)$ induces the zero map on homology for all complexes $A$ in $\sfC$.  Similarly, $f:M\to N$ is $\sfC$-coghost if and only if the map 
$\RHom_R(N,A)\to \RHom_R(M,A)$ induces the zero map on homology for all complexes $A$ in $\sfC$.
\end{remark}

\begin{remark} \label{ghost-remark2} A morphism $f:M\to N$ in $\sfD(R)$ is $R$-ghost  if and only if the induced map $f_*:\hh_*(M)\to \hh_*(N)$ on homology is zero.  Moreover, $f$ is $R$-ghost if and only if $f$ is $\sfP$-ghost (equivalently, $\widetilde{\sfP}$-ghost).  If $R$ is Noetherian and $M$ and $N$ are in $\sfD^f(R)$ then $f$ is $R$-coghost if and only if $f$ is $\sfP$-coghost, since the functors $\RHom_R(M,-)$ and $\RHom_R(N,-)$ commute with (arbitrary) direct sums.

\end{remark}

In general, upper bounds on the level of an object are easier to obtain than lower bounds. For example, Corollaries \ref{level-inequality}, \ref{level-pd}, and \ref{easy-inequality} give upper bounds on levels which follow easily from the definition and elementary properties.   A key tool for obtaining lower bounds is the following result, known as the ``Ghost Lemma''.  It was first proved by G. Kelly in 1965 \cite[Theorem 3]{Ke}. (See also \cite[Lemma 4.11]{R}.)  There is a version for both ghost maps and coghost maps:

\begin{theorem} \label{ghost} Let $\sfT$ be a triangulated category, $\sfC$ a collection of objects of $\sfT$, and $f_i:X_i\to X_{i+1}$ for $0\le i\le n-1$ a sequence of maps in $\sfT$ such that $f_{n-1}f_{n-2} \cdots f_0$ is a nonzero morphism in $\sfT$.  
\begin{enumerate}[(a)]
\item {\rm (Ghost Lemma)} If each $f_i$ is $\sfC$-ghost then $\level_{\sfT}^{\sfC}X_0\ge n+1$.
\item {\rm(Coghost Lemma)} If each $f_i$ is $\sfC$-coghost then $\level_{\sfT}^{\sfC}X_{n}\ge n+1$.
\end{enumerate}
\end{theorem}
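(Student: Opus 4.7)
The plan is to prove (a) by induction on $n$, establishing the contrapositive: if $X_0 \in \thick^n_{\sfT}(\sfC)$ and $f_0,\dots,f_{n-1}$ are $\sfC$-ghost, then $f_{n-1}\cdots f_0 = 0$. Part (b) will follow by applying (a) in the opposite category $\sfT^{\mathrm{op}}$, which is again triangulated, in which $\sfC$-coghost maps become $\sfC$-ghost maps, and in which the thickening construction is unchanged since it uses only triangles, direct sums, and summands.

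For the base case $n=1$, the object $X_0$ is a retract of some $Y=\bigoplus_j \susp^{i_j}\! A_j$ with $A_j \in \sfC$; fix $p\colon Y\to X_0$ and $q\colon X_0\to Y$ with $pq=\id$, and let $\iota_j,\pi_j$ be the canonical summand maps of $Y$. The ghost hypothesis on $f_0$ yields $f_0 p\iota_j =0$ for every $j$, so $f_0 p=\sum_j (f_0 p\iota_j)\pi_j=0$, whence $f_0=f_0 pq=0$.

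For the inductive step, using
\[
\thick^n_{\sfT}(\sfC)=\smd\bigl(\thick^{n-1}_{\sfT}(\sfC)\star \thick^1_{\sfT}(\sfC)\bigr),
\]
present $X_0$ as a retract of some $Y$ sitting in a triangle $U\xrightarrow{u} Y\xrightarrow{v} V\to \susp U$ with $U\in \thick^{n-1}_{\sfT}(\sfC)$ and $V\in \thick^1_{\sfT}(\sfC)$. Pick $p,q$ with $pq=\id_{X_0}$ as before and set $g_0:=f_0 p\colon Y\to X_1$; it is $\sfC$-ghost because the class of $\sfC$-ghost maps is a two-sided ideal, and it suffices to prove $f_{n-1}\cdots f_1 g_0=0$, as precomposing with $q$ recovers $f_{n-1}\cdots f_0$. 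The composite $g_0 u\colon U\to X_1$ is itself $\sfC$-ghost, so applying the induction hypothesis to the $n-1$ ghost maps $g_0 u,\, f_1,\dots, f_{n-2}$ out of $U$ gives $f_{n-2}\cdots f_1 g_0 u=0$. Hence $f_{n-2}\cdots f_1 g_0$ factors through $v$, say as $hv$ for some $h\colon V\to X_{n-1}$. Finally, $f_{n-1} h\colon V\to X_n$ is $\sfC$-ghost, so the base case applied to $V\in \thick^1_{\sfT}(\sfC)$ forces $f_{n-1} h=0$, and therefore $f_{n-1}\cdots f_1 g_0 = f_{n-1} h v =0$, as required.

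The main obstacle is that the connecting map $V\to \susp U$ in the above triangle is not itself ghost in general, so one cannot simply iterate the base case along the triangle decomposition. The trick is to bury an extra composition into the first ghost map to produce $g_0 u$ out of $U$, feed this together with $f_1,\dots,f_{n-2}$ to the induction hypothesis on $U$, and reserve $f_{n-1}$ to kill, via the base case, the residual map out of $V$ that remains after the factorization.
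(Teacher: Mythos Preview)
The paper does not supply its own proof of Theorem~\ref{ghost}; it merely cites the result as classical, attributing it to Kelly \cite[Theorem~3]{Ke} with a further reference to Rouquier \cite[Lemma~4.11]{R}. There is therefore nothing in the paper to compare against beyond the citation.

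Your argument is correct and is essentially the standard proof one finds in the literature (e.g.\ in Rouquier's paper). The induction is set up properly: the base case uses that $\thick^1_{\sfT}(\sfC)=\smd(\add^{\susp}(\sfC))$ consists of retracts of \emph{finite} sums of shifts of objects of $\sfC$, so the decomposition $\id_Y=\sum_j\iota_j\pi_j$ is legitimate; the inductive step correctly exploits the exact triangle to factor $f_{n-2}\cdots f_1 g_0$ through $v$ once its composite with $u$ vanishes, and then spends the remaining ghost map $f_{n-1}$ on the $\thick^1$ object $V$. The reduction of (b) to (a) via $\sfT^{\mathrm{op}}$ is also fine: the thickening construction is self-dual, and coghost maps in $\sfT$ are precisely ghost maps in $\sfT^{\mathrm{op}}$ (with suspension inverted, which is immaterial since one quantifies over all shifts).
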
 

There is an important converse to the Coghost Lemma in the case $\sfT=\sfD^f_b(R)$ proved by Oppermann and \v{S}\v{t}ov\'i\v cek \cite[Theorem 24]{OS}:

\begin{theorem} \label{converse} Suppose $R$ is Noetherian, $M$ and $C$ are objects in $\sfD_b^f(R)$, and that $\level_R^C M\ge n+1$ for some $n\ge 1$.  Then there exist $C$-coghost maps
$f_i:M_i\to M_{i+1}$ for $0\le i\le n-1$ in $D^f_b(R)$ with $M_n=M$ and $f_{n-1}f_{n-2} \cdots f_0$ a nonzero morphism.
\end{theorem}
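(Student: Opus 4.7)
My plan is to prove the theorem by induction on $n$, constructing a tower of $C$-coghost maps $M_0 \to M_1 \to \cdots \to M_n = M$ via a ``universal coghost cover.'' The key construction is: for each $X \in \sfD^f_b(R)$, produce a triangle
$$Y \xrightarrow{\varphi} X \xrightarrow{\eta} Z \to \susp Y$$
in $\sfD^f_b(R)$ with $Z \in \thick^1_{\sfD^f_b(R)}(C)$ such that $\eta^{*}\colon \Hom_{\sfD(R)}(Z,\susp^{k}C) \to \Hom_{\sfD(R)}(X,\susp^{k}C)$ is surjective for every integer $k$. The surjectivity condition forces $\varphi$ to be $C$-coghost by the long exact sequence in $\Hom_{\sfD(R)}(-,\susp^{k}C)$. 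Granting this, I set $M_n = M$, apply the construction to obtain $f_{n-1}\colon M_{n-1} \to M_n$, and iterate. At each stage Proposition \ref{level-facts}(5) combined with $\level^{C}_{R} Z \le 1$ yields $\level^{C}_{R} M_{i-1} \ge \level^{C}_{R} M_i - 1$, so all $M_i$ remain within $\sfD^f_b(R)$ and have the required levels.

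The main technical point is building the universal coghost cover as a \emph{finite} object in $\sfD^f_b(R)$. Since $R$ is Noetherian and both $X$ and $C$ have finitely generated homology in bounded range, each group $\Hom_{\sfD(R)}(X, \susp^{k} C) \cong \Ext^{-k}_R(X,C)$ is a finitely generated $R$-module. The difficulty is that these may be nonzero for infinitely many $k$, so one cannot naively take a direct sum over all generators and remain in $\sfD^f_b(R)$. I would handle this by first reducing, via Nakayama, to finding generators for the quotient of $\bigoplus_k \Ext^{-k}_R(X,C)$ by the action of the graded Jacobson radical of the Yoneda algebra $\bigoplus_k \Ext^{k}_R(C,C)$, choose a finite lift $g_1,\dots,g_m$ with $g_j\colon X \to \susp^{k_j} C$, and take $Z = \bigoplus_{j=1}^m \susp^{k_j} C$ with $\eta = (g_1,\dots,g_m)$. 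Completing to a triangle produces the required cover.

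The final step is ensuring the composition $f_{n-1} f_{n-2} \cdots f_0$ is nonzero. Here I would argue by contradiction: if the composition vanished, then by iteratively applying the octahedral axiom to the triangles $M_{i-1} \to M_i \to Z_i \to \susp M_{i-1}$ (with each $Z_i \in \thick^{1}_{\sfD^f_b(R)}(C)$), one can factor the zero map $M_0 \to M_n$ through a sequence of cones and deduce that $M_n$ lies in $\smd(\add^{\susp}(C) \star \cdots \star \add^{\susp}(C))$ with $n$ factors. Combined with the identity used in the proof of Proposition \ref{level-facts}(5), this places $M$ in $\thick^{n}_{\sfD^f_b(R)}(C)$, contradicting the hypothesis $\level_{R}^{C} M \ge n+1$.

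The main obstacle I anticipate is the finiteness step in the construction of the coghost cover: controlling the potentially infinite collection of $\Ext$-groups $\Ext^{*}_R(X,C)$ while remaining in $\sfD^f_b(R)$ and keeping $Z$ a finite sum of shifts of $C$. This is precisely where the Noetherian hypothesis and the finiteness of homology are essential, and it is the technical heart of the Oppermann--\v{S}\v{t}ov\'i\v{c}ek argument; the inductive skeleton and the nonvanishing-of-composition argument are comparatively formal once the cover is in hand.
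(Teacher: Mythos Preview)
The paper does not supply its own proof of this statement; it is quoted directly from Oppermann--\v{S}\v{t}ov\'i\v{c}ek \cite[Theorem~24]{OS}. So there is nothing in the paper to compare against except your own argument, and that argument has a genuine gap precisely where you flagged the difficulty.

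Your inductive scheme and the octahedral argument for non\-vanishing are sound: if each cone $Z_i$ lies in $\thick^1(C)$, then repeated use of the octahedral axiom shows the cone of the composite $M_0\to M_n$ lies in $\thick^n(C)$, and vanishing of the composite would force $M_n$ to be a summand of that cone. The problem is the construction of the cover. The existence of $\eta\colon X\to Z$ with $Z\in\thick^1(C)$ and $\eta^*$ surjective on every $\Hom(-,\susp^k C)$ is \emph{equivalent} to $\bigoplus_k\Ext^k_R(X,C)$ being finitely generated as a graded right module over the Yoneda algebra $\bigoplus_k\Ext^k_R(C,C)$, and this fails already for $X=M$ in simple cases. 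Take $R=k[x,y]/(x,y)^2$, $C=R$, and $M=k$. Then $\Ext^*_R(R,R)=R$ is concentrated in degree~$0$, while $\Ext^i_R(k,R)\neq 0$ for every $i\ge 0$ because $R$ is Artinian and not Gorenstein; since $\fm$ annihilates each $\Ext^i_R(k,R)$, the total $\bigoplus_i\Ext^i_R(k,R)$ is an infinite-dimensional $k$-vector space and cannot be finitely generated over $R$. Your Nakayama reduction does not help here: the graded Jacobson radical of $\Ext^*_R(R,R)=R$ is $\fm$, and the quotient of $\bigoplus_i\Ext^i_R(k,R)$ by $\fm\cdot\bigoplus_i\Ext^i_R(k,R)$ is the module itself. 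More generally, graded Nakayama requires either the module to already be finitely generated (which is the conclusion you want) or structural hypotheses on the grading that a Yoneda algebra over a commutative Noetherian ring need not satisfy.

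Since $\level_R^R k=\infty$ in this example, the hypothesis of the theorem is met for every $n\ge 1$, yet the very first step of your construction (the cover of $M_n=M=k$) cannot be carried out. The Oppermann--\v{S}\v{t}ov\'i\v{c}ek proof does not proceed by iterating $\thick^1(C)$-approximations built from finite generation of $\Ext$; you will need to consult \cite{OS} for the actual mechanism.
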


We note that a converse of the Ghost Lemma for $\sfD^f_b(R)$ has been proved by J. Letz in the case $R$ is a quotient of a Gorenstein ring of finite dimension \cite[2.13]{L}.  However, it is unknown whether such a result holds for all commutative Noetherian rings.

As an application of these results, we prove that for objects in $\sfD_+^f(R)$ the level with respect to $R$ is the same as the level with respect to $\sfP$:

\begin{proposition} \label{P-level}  Let $R$ be Noetherian and $M$ an object in $\sfD_+^f(R)$.  Then $$\level_R^{\sfP} M=\level_R^R M.$$
\end{proposition}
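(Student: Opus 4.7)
The plan is to prove the two inequalities separately. The direction $\level_R^{\sfP} M \le \level_R^R M$ is immediate: because $R \in \sfP$, one has $\thick^n_{\sfD(R)}(R) \subseteq \thick^n_{\sfD(R)}(\sfP)$ for every $n$, so the infimum over thickenings of $\sfP$ is no larger than the infimum over thickenings of $R$.

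For the nontrivial inequality $\level_R^R M \le \level_R^{\sfP} M$, I would argue by contradiction using the ghost/coghost machinery just developed. If $\level_R^{\sfP} M = \infty$ there is nothing to prove, so assume $\level_R^{\sfP} M = n$ is finite. The first preparatory step is to observe that $M$ actually lies in $\sfD^f_b(R)$. This follows by a short induction on $n$: the case $n \le 1$ is Proposition \ref{finite-level}(a), which forces $M$ to be isomorphic to a bounded complex of projectives (hence bounded); and the inductive step uses the octahedral definition of thickenings together with the fact that an exact triangle whose outer terms are bounded has its middle term bounded, plus boundedness is inherited by summands. Combined with $M \in \sfD^f_+(R)$, this places $M$ in $\sfD^f_b(R)$, which is the setting where the converse coghost lemma (Theorem \ref{converse}) applies.

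Now suppose for contradiction that $\level_R^R M \ge n+1$. Theorem \ref{converse} with $C = R$ produces $R$-coghost maps $f_i\colon M_i \to M_{i+1}$ in $\sfD^f_b(R)$ for $0 \le i \le n-1$, with $M_n = M$ and $f_{n-1}\cdots f_0 \ne 0$. The key bridging observation is Remark \ref{ghost-remark2}: because each $M_i, M_{i+1}$ lies in $\sfD^f(R)$, being $R$-coghost is equivalent to being $\sfP$-coghost, so each $f_i$ is in fact $\sfP$-coghost when viewed as a map in $\sfD(R)$. The coghost lemma (Theorem \ref{ghost}(b)), applied in $\sfD(R)$ with $\sfC = \sfP$, then gives $\level_R^{\sfP} M = \level_R^{\sfP} M_n \ge n+1$, contradicting $\level_R^{\sfP} M = n$. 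Hence $\level_R^R M \le n$, finishing the proof.

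The only nontrivial point is recognizing that all three tools mesh: the converse lemma is available only for $R$-coghost maps in $\sfD^f_b(R)$, the forward coghost lemma is applied for $\sfP$ in $\sfD(R)$, and Remark \ref{ghost-remark2} is precisely the interchange that links the two. The mild technical step—checking that finite $\sfP$-level on a complex in $\sfD^f_+(R)$ places it in $\sfD^f_b(R)$ so that Theorem \ref{converse} is even applicable—is the only place where something needs to be verified beyond quoting previous results.
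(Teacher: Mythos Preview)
Your argument is correct and follows essentially the same route as the paper's: the nontrivial inequality is obtained by applying the converse coghost lemma (Theorem~\ref{converse}) for $C=R$, invoking Remark~\ref{ghost-remark2} to upgrade $R$-coghost to $\sfP$-coghost, and then feeding the resulting chain into the forward Coghost Lemma for~$\sfP$. The only slip is that your base case for the boundedness reduction should cite Corollary~\ref{perfect}(a) (the $\sfP$ version) rather than Proposition~\ref{finite-level}(a) (which is stated for $\sfG$); the paper handles this reduction more simply by observing at the outset that if $M$ is not homologically bounded above then both levels are infinite.
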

\begin{proof} We first note that if $M$ is not (homologically) bounded above both quantities must be infinite.  Thus, we may assume $M$ is in $\sfD_b^f(R)$.  The inequality $\level_R^{\sfP} M\le \level_R^R M$ is clear.  The reverse inequality is clear if $\level_R^R M\le 1$.  Suppose $\level_R^R M=n\ge 2$.  It suffices to prove $\level_R^{\sfP} M\ge n$.  By the converse coghost lemma,  there exist $R$-coghost maps
$f_i:M_i\to M_{i+1}$ for $0\le i\le n-1$ in $D^f_b(R)$ with $M_n=M$ and $f_{n-1}f_{n-2} \cdots f_0$ a nonzero morphism.  As noted in Remark \ref{ghost-remark2}, the maps $f_i$ are also $\sfP$-coghost.  Since $f_{n-1}f_{n-2} \cdots f_0$ is  a nonzero morphism we obtain that $\level_R^{\sfP} M\ge n$ by the Coghost Lemma.

\end{proof}

\end{subsection}

\section{Main Results}

We begin with a couple of technical results:

\begin{lemma} \label{tricky-lemma}
Consider a diagram of $R$-modules and $R$-linear maps with exact rows and such that the squares commute:

\begin{center} \begin{tikzcd}
0 \arrow[r] & A \arrow[r,"f"] \arrow[d, "\alpha"] & B \arrow[r,"g"] \arrow[d, "\beta"]  \arrow[ld, dashed, "\exists \phi", labels=above left]& C \arrow[r] \arrow[d, "\gamma"] & 0 \\
0 \arrow[r] &D \arrow[r,"i"]                                 & E \arrow[r,"j"]                  & F \arrow[r]                             & 0.
\end{tikzcd}
\end{center}
Suppose that
\begin{enumerate}
\item there exists an $R$-linear map $\phi:B\to D$ such that $\alpha=\phi f$, and 
\item the induced map $\Ext_R^1(F,D)\to \Ext_R^1(C,D)$ is injective.
\end{enumerate}
Then the bottom row splits.
\end{lemma}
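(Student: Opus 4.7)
The plan is to recast the statement in the language of Yoneda Ext classes. Let $\eta \in \Ext^1_R(C,A)$ denote the class of the top row and $\xi \in \Ext^1_R(F,D)$ the class of the bottom row; the bottom row splits precisely when $\xi = 0$. Since the vertical maps $(\alpha,\beta,\gamma)$ constitute a morphism of short exact sequences, the standard naturality of Yoneda Ext yields the identity
\[ \gamma^*(\xi) \;=\; \alpha_*(\eta) \]
in $\Ext^1_R(C,D)$. Concretely, both sides are represented by the same extension: the pushout of the top row along $\alpha$ coincides with the pullback of the bottom row along $\gamma$, and $\beta$ furnishes the bridging map $[b,d] \mapsto (\beta(b) + i(d),\, g(b))$ realizing the isomorphism of extensions.

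Next I interpret hypothesis (1). The long exact Ext sequence induced by applying $\Hom_R(-,D)$ to the top row produces a connecting homomorphism
\[ \partial: \Hom_R(A,D) \longrightarrow \Ext^1_R(C,D), \]
and by construction $\partial(\alpha) = \alpha_*(\eta)$. The existence of $\phi: B \to D$ with $\alpha = \phi f$ is exactly the statement that $\alpha$ is in the image of $\Hom_R(B,D) \to \Hom_R(A,D)$, equivalently that $\alpha \in \ker\partial$. Thus $\alpha_*(\eta) = 0$, and combining with the identity above gives $\gamma^*(\xi) = 0$ in $\Ext^1_R(C,D)$.

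Hypothesis (2) says precisely that $\gamma^*: \Ext^1_R(F,D) \to \Ext^1_R(C,D)$ is injective, so $\xi = 0$, i.e., the bottom row splits. The only non-bookkeeping ingredient is the naturality identity $\gamma^*(\xi) = \alpha_*(\eta)$; this is classical, so I do not expect any genuine obstacle. If one prefers to avoid invoking Yoneda Ext machinery, the same argument can be carried out diagrammatically by showing that the splitting $\phi$ of the pushout extension transfers across the bridging map to a splitting of the pullback extension, after which hypothesis (2) is applied directly.
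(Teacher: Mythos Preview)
Your proof is correct and follows essentially the same approach as the paper's: both arguments apply $\Hom_R(-,D)$ to the two rows, observe that hypothesis~(1) forces $\alpha$ into the kernel of the connecting map (equivalently $\alpha_*(\eta)=0$), use commutativity to identify this with $\gamma^*(\xi)$, and then invoke the injectivity in hypothesis~(2) to conclude $\xi=0$. The only difference is packaging---you phrase everything via Yoneda Ext classes and the naturality identity $\gamma^*(\xi)=\alpha_*(\eta)$, whereas the paper works directly with the commutative square of long exact sequences and tracks $1_D$ through it---but the underlying argument is identical.
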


\begin{proof}
Applying $\Hom_R(-,D)$ we get a commutative diagram with exact rows:

\begin{center}\begin{tikzcd}
{\Hom_R(E,D)} \arrow[r, "i^*"] \arrow[d]   & {\Hom_R(D,D)} \arrow[r,"\delta" ] \arrow[d, "\alpha^*"] & {\Ext_R^1(F,D)} \arrow[d, hook] \\
{\Hom_R(B,D)} \arrow[r, "f^*"] & {\Hom_R(A,D)} \arrow[r, "\epsilon"]                         & {\Ext^1_R(C,D)}                
\end{tikzcd}
\end{center}

Note that  $\alpha^*(1_D)=\alpha =\phi f=f^*(\phi) \in \im f^*=\ker \epsilon$. Thus $1_D \in \ker\delta=\im i^*$, by the assumed injectivity of the right-most vertical map. That is, $1_D=\sigma i$ for some $\sigma : E\to D$. 
\end{proof}

\begin{lemma} \label{qi} Let $f:P\to Q$  be a quasi-isomorphism of $R$-complexes of Gorenstein projective modules such that $P^{\#}$ and $Q^{\#}$ are bounded below.   Then for any $R$-module $M$ of finite projective dimension and for all integers $i\ge 1$ and all $v$, we have the following isomorphisms induced by $f$:
\begin{enumerate}[(a)]
\item $\Ext^i_R(\cc_v(Q),M)\cong \Ext^i_R(\cc_v(P), M)$;
\item $\Ext^i_R(\bb_v(Q),M)\cong \Ext^i_R(\bb_v(P), M)$.
\end{enumerate}
\end{lemma}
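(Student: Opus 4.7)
The plan is to prove (a) and (b) simultaneously by induction on $v$, running upward from a common lower bound $a$ satisfying $P_v=Q_v=0$ for all $v<a$. For such $v$ the modules $\bb_v(P)$, $\bb_v(Q)$, $\cc_v(P)$, $\cc_v(Q)$ all vanish, so the base case is trivial.

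For the inductive step at $v\geq a$, I will exploit two natural short exact sequences available for any complex $X$:
\[
(A)\colon\; 0\to\bb_v(X)\to X_v\to\cc_v(X)\to 0,\qquad (B)\colon\; 0\to\hh_v(X)\to\cc_v(X)\to\bb_{v-1}(X)\to 0.
\]
Both are natural in $X$, so $f$ yields comparison morphisms between the $P$- and $Q$-versions. Each $X_v$ is Gorenstein projective, so Lemma \ref{vanishing} gives $\Ext_R^i(X_v,M)=0$ for all $i\geq 1$. Applying $\Hom_R(-,M)$ to (A) and using this vanishing collapses the long exact sequence to a natural isomorphism $\Ext_R^i(\bb_v(X),M)\cong \Ext_R^{i+1}(\cc_v(X),M)$ for $i\geq 1$, providing a dimension shift that converts $\cc_v$-information into $\bb_v$-information.

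The key step is a Five Lemma argument on the commutative ladder produced by feeding the $P$- and $Q$-versions of (B) into $\Hom_R(-,M)$. The comparison maps on $\Ext_R^{\ast}(\hh_v(-),M)$ are isomorphisms because $f$ is a quasi-isomorphism, and the comparison maps on $\Ext_R^i(\bb_{v-1}(-),M)$ for $i\geq 1$ are isomorphisms by the inductive hypothesis. The Five Lemma then yields an isomorphism $\Ext_R^i(\cc_v(Q),M)\cong \Ext_R^i(\cc_v(P),M)$ for each $i\geq 1$, establishing (a) at $v$. Combining this with the dimension shift from (A) gives (b) at $v$ and closes the induction.

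The main subtlety I anticipate is the bookkeeping around $i=1$: the Five Lemma at this index involves the outer term $\Ext_R^0(\hh_v(-),M)=\Hom_R(\hh_v(-),M)$, which is an isomorphism because $\hh_v(f)$ is, as well as the term $\Ext_R^2(\bb_{v-1}(-),M)$, which is supplied by the inductive hypothesis at $v-1$. Once these boundary cases are verified, the induction runs uniformly for all $i\geq 1$.
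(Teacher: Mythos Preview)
Your proposal is correct and follows essentially the same approach as the paper: simultaneous upward induction on $v$ from a common lower bound, using the short exact sequence $0\to\hh_v\to\cc_v\to\bb_{v-1}\to 0$ together with the Five Lemma and the inductive hypothesis on $\bb_{v-1}$ to establish (a), and then the sequence $0\to\bb_v\to X_v\to\cc_v\to 0$ together with the vanishing $\Ext^i_R(X_v,M)=0$ for $i\ge 1$ to deduce (b). The only cosmetic difference is that you phrase the second step as a dimension shift $\Ext^i_R(\bb_v,M)\cong\Ext^{i+1}_R(\cc_v,M)$, whereas the paper runs the comparison of long exact sequences directly; the content is identical.
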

\begin{proof}
Let $n=\min\{\inf P^{\#}, \inf Q^{\#}\}$.  Both isomorphisms clearly hold for all $i$ and $v<n$.  Let $j\ge n$ and assume the isomorphisms hold for all $i\ge 1$ and all $v<j$.   We have the following commutative diagram where the vertical arrows are induced by $f$:

\begin{center}\begin{tikzcd}
0 \arrow[r] &{\hh_j(P)}\arrow[r] \arrow[d]   & {\cc_j(P)} \arrow[r] \arrow[d] & {\bb_{j-1}(P)} \arrow[d] \arrow[r] & 0 \\
0 \arrow[r] & {\hh_j(Q)} \arrow[r] & {\cc_j(Q)} \arrow[r]                        & {\bb_{j-1}(Q)}      \arrow[r] & 0.        
\end{tikzcd}
\end{center}

Since $f$ is a quasi-isomorphism, the left-most vertical arrow is an isomorphism.  From the long exact sequences on $\Ext^i_R(-,M)$ and using the induction hypothesis for $v=j-1$, we see that $\Ext^i_R(\cc_j(Q),M)\cong \Ext^i_R(\cc_j(P), M)$ for all $i\ge 1$ by the Five Lemma.

Consider now the commutative diagram

\begin{center}\begin{tikzcd}
0 \arrow[r] &{\bb_j(P)}\arrow[r] \arrow[d]   & {P_j} \arrow[r] \arrow[d] & {\cc_{j}(P)} \arrow[d] \arrow[r] & 0 \\
0 \arrow[r] & {\bb_j(Q)} \arrow[r] & {Q_j} \arrow[r]                        & {\cc_{j}(Q)}      \arrow[r] & 0          
\end{tikzcd}
\end{center}
where again the vertical maps are induced by $f$.   From the induced long exact sequences on $\Ext^i_R(-,M)$, the isomorphisms $\Ext^i_R(\cc_j(Q),M)\cong \Ext^i_R(\cc_j(P),M)$ for all $i\ge 1$, and  $\Ext^i_R(P_j,M)=\Ext^i_R(Q_j,M)=0$ for all $i\ge 1$ (Lemma \ref{vanishing}), we obtain that $\Ext_R^i(\bb_j(Q),M)\cong \Ext^i_R(\bb_j(P),M)$ for all $i\ge 1$.

\end{proof}

\begin{theorem} \label{main-theorem} Let $M$ be a nonzero object in $\sfD_+(R)$. Then $$\level_R^{\sfG} M\ge \Gpd_R M - \sup M+1.$$
Moreover, if $R$ is Noetherian and $M$ is in $\sfD^f_+(R)$, then
$$\level_R^{\widetilde{\sfG}} M\ge \Gpd_R M - \sup M+1.$$

\end{theorem}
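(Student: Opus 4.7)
The plan is to apply the Ghost Lemma (Theorem \ref{ghost}(a)): I will build a chain of $g - s$ $\sfG$-ghost morphisms in $\sfD(R)$ starting at $M$ whose composition is nonzero, where $g := \Gpd_R M$ and $s := \sup M$. First the easy cases: if $g = \infty$ then $\level_R^{\sfG} M = \infty$ by Proposition \ref{finite-level}(b); if $g \le s$ then the right-hand side $g - s + 1$ is at most $1$ and the inequality holds since $M \ne 0$ forces $\level_R^{\sfG} M \ge 1$. So I may assume $s < g < \infty$.

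The building blocks will be the Christensen--Iyengar resolutions. For each $n$ with $s < n \le g$, Theorem \ref{CI-thm} produces $X^{(n)} \simeq M$ in $\sfD_+(R)$ with $X^{(n)}_i$ projective for $i \ne n$, $X^{(n)}_n$ Gorenstein projective, and $X^{(n)}_i = 0$ for $i \notin [\inf M, g]$. Writing $Y^{(n)} := X^{(n)}_{\le n-1}$, the short exact sequence of complexes
$$0 \longrightarrow Y^{(n)} \longrightarrow X^{(n)} \longrightarrow X^{(n)}_{\ge n} \longrightarrow 0$$
yields an exact triangle in $\sfD(R)$ whose third term is quasi-isomorphic to $\susp^{n}\cc_n(X^{(n)})$, since $X^{(n)}_{\ge n}$ has homology concentrated in degree $n$. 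Crucially, $Y^{(n)}$ is a bounded complex of projective modules, hence of finite projective dimension, which sets the stage for invoking Lemma \ref{vanishing}.

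The chain of ghost maps will take the form $M \to M_1 \to M_2 \to \cdots \to M_{g-s}$ with $M_j := \susp^{s+j}\cc_{s+j}(X^{(s+j)})$; in particular $M_{g-s} = \susp^{g} X^{(g)}_g$. Each arrow is to be built as a connecting morphism: the first from the triangle displayed above (with $n = s+1$), and subsequent arrows via TR3 applied to chain maps comparing consecutive resolutions $X^{(s+j)}$ and $X^{(s+j+1)}$ (such chain maps exist because both are quasi-isomorphic to $M$). The main obstacle---and the reason the argument is significantly more involved than for projective dimension---is verifying that each such arrow is $\sfG$-ghost. Unpacking, what one needs is that for every Gorenstein projective $G$ and every morphism $G \to \cc_{s+j}(X^{(s+j)})$, a certain pullback extension splits. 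This is precisely the role of the splitting lemma (Lemma \ref{tricky-lemma}): I would construct a morphism of short exact sequences juxtaposing this pullback with an analogous ses coming from $X^{(s+j+1)}$ (whose middle term is projective), verify condition (1) using the projectivity of the middle term together with Lemma \ref{vanishing}, and verify condition (2)---the injectivity of an induced $\Ext^1$ map---by appealing to Lemma \ref{qi} applied to the comparison quasi-isomorphism between $X^{(s+j)}$ and $X^{(s+j+1)}$.

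Finally, nonvanishing of the composition of the $g - s$ ghost maps will be established via a dimension-shifting / Yoneda argument: interpreting the composition as a class in an Ext group computed through $X^{(g)}$, its vanishing would yield a Gorenstein projective resolution of $M$ of length strictly less than $g$, contradicting $\Gpd_R M = g$. The Ghost Lemma then delivers $\level_R^{\sfG} M \ge g - s + 1$. The finitely generated assertion follows by the identical argument, invoking the finitely generated variant of Theorem \ref{CI-thm}.
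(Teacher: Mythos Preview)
Your overall strategy---build $g-s$ $\sfG$-ghost maps out of $M$ and apply the Ghost Lemma---matches the paper's, and you correctly identify Theorem~\ref{CI-thm}, Lemma~\ref{tricky-lemma}, and Lemma~\ref{qi} as the essential ingredients. But you have inverted where the real work lies, and this creates a genuine gap.

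The paper uses a \emph{single} Christensen--Iyengar resolution $X$ with the Gorenstein projective placed in degree $n=\sup M$. The ghost maps are then the naive truncations $\phi_i\colon X_{\ge i}\to X_{\ge i+1}$ for $i\ge n$. Because every $X_j$ with $j>n$ is projective, each target satisfies $\pd_R\cc_{i+1}(X)<\infty$, so $\Ext^{\ge 1}(G,\cc_{i+1}(X))=0$ by Lemma~\ref{vanishing} and the ghost property is \emph{immediate}. The hard part is showing the composite $X\to X_{\ge g}$ is nonzero, and \emph{that} is where Lemma~\ref{tricky-lemma} and Lemma~\ref{qi} enter: if the composite were null-homotopic, one extracts a diagram to which the splitting lemma applies, forcing $\cc_{g-1}(X)$ to be Gorenstein projective and contradicting $\Gpd_R M=g$.

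Your scheme instead uses a different resolution $X^{(s+j)}$ at each step, with the Gorenstein projective sitting in degree $s+j$. This means your target module $\cc_{s+j+1}(X^{(s+j+1)})$ does \emph{not} have finite projective dimension (the Gorenstein projective $X^{(s+j+1)}_{s+j+1}$ sits right above it), so Lemma~\ref{vanishing} no longer kills the relevant Ext groups. Your proposed splitting-lemma argument for ghostness addresses only the case of a morphism $G\to\cc_{s+j}(X^{(s+j)})$, i.e.\ the single degree $k=s+j$; for $k<s+j$ the source $\Ext^{s+j-k}(G,\cc_{s+j}(X^{(s+j)}))\cong\Ext^{s+j-k}(G,X^{(s+j)}_{s+j})$ is typically nonzero and you give no reason the induced map vanishes. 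Meanwhile, your ``dimension-shifting / Yoneda'' sketch for nonvanishing is too vague to stand on its own---this is exactly the step that, in the paper, requires the delicate combination of Lemma~\ref{tricky-lemma} and Lemma~\ref{qi}.

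In short: drop the multiple resolutions, use one $X$ with the Gorenstein projective at degree $\sup M$, and redirect Lemma~\ref{tricky-lemma} and Lemma~\ref{qi} to the nonvanishing step.
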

\begin{proof}  We prove the first statement.  The second statement is proved similarly.

 We may assume $\level_R^{\sfG}(M)<\infty$.  Hence $\Gpd_RM<\infty$ by Proposition \ref{finite-level}.  Set $n:=\sup M$, $\ell:=\inf M$ and $g:=\Gpd_R M$.  Certainly $g\ge n\ge \ell$ as $M\not\simeq 0$.  If $g=n$ then the inequality is clear, as $M\not\simeq 0$.  Suppose now that $g>n$.   By Theorem \ref{CI-thm},  there exists an $R$-complex $X$ such that $X\simeq M$ in $\sfD_+(R)$, $X_i=0$ for $i>g$ or $i<\ell$, $X_i$ is projective for all $i\neq n$, and $X_n$ is Gorenstein projective.  Without loss of generality, we may replace $M$ with $X$ in the theorem.  

For any integer $i$ let $\phi_i:X_{\ge i}\to X_{\ge i+1}$ be the natural map of truncated complexes.  

\medskip
\noindent {\it Claim 1:} For all $i\ge n$ we have $\phi_i$ is $\sfG$-ghost.

\smallskip
\noindent {\it Proof of Claim 1:}  By Remark \ref{ghost-remark1}, it suffices to prove that for all $i\ge n$ the induced map $\Ext^j_R(A,X_{\ge i})\to \Ext^j_R(A,X_{\ge i+1})$ is zero for all $j$ and all $A\in \sfG$.  Note that for any $i\ge n$, $X_{\ge i}\simeq \susp^i\, \cc_i(X)$.   Hence, it suffices to prove that for any $i\ge n$ the map $\Ext^j_R(A,\cc_i(X))\to \Ext^{j+1}_R(A, \cc_{i+1}(X))$ is zero for all $j$ and all $A\in \sfG$.  Since $A$ and $\cc_i(X)$ are modules, this is clear for all  $j<0$.  If $j\ge 0$ and $i\ge n$, we note that $\pd_R \cc_{i+1}(X)<\infty$ since $X_k$ is projective for all $k\ge i+1$.
Hence, $\Ext^{j+1}_R(A, \cc_{i+1}(X))=0$ for all $A\in \sfG$ by Lemma \ref{vanishing}. 

\medskip
Let $\rho$ be the natural truncation map $X\to X_{\ge n}$ and $\phi_n'=\phi_n\rho$.  As $\phi_n$ is $\sfG$-ghost so is $\phi_n'$.  Now let $\psi=\phi_{g-1}\phi_{g-2}\cdots\phi_{n+1}\phi_n':X\to X_{\ge g}$.
Then $\psi$ is a composition of $g-n$ $\sfG$-ghost maps.

\medskip
\noindent {\it Claim 2:} $\psi$ induces a nonzero morphism in $\sfD_+(R)$.

\smallskip
\noindent {\it Proof of Claim 2:}   Suppose $\psi=0$ in $\sfD_+(R)$. Choose a semi-projective resolution $\sigma:P\to X$ with $\inf P^{\#}=\inf X^{\#}=\ell$ (cf. \cite[1.7]{AF}).  Then $\psi\sigma:P\to X_{\ge g}$ induces the zero morphism $\sfD_+(R)$.  Since $P$ is semi-projective, this implies $\psi\sigma$ is null-homotopic.    This means there exists a map $\tau: P_{g-1}\to X_g$ such that $\sigma_g=\tau\partial^P_g$ where $\partial^P_g:P_g\to P_{g-1}$ is the $n$th differential of the complex $P$. (Here we are using that $X_i=0$ for $i>g$.)  Hence we obtain the following diagram where the squares commute and $\tau\overline{\partial^P_g}=\overline{\sigma}_g$:

\begin{center}\begin{tikzcd}
0 \arrow[r ] &{\cc_g(P)}\arrow[r, "\overline{\partial^P_g}"] \arrow[d, "\overline{\sigma}_g"]   & {P_{g-1}} \arrow[r] \arrow[d, "\sigma_{g-1}" ] \arrow[ld, dashed, "\tau", labels=above left] & {\cc_{g-1}(P)} \arrow[d, "\overline{\sigma}_{g-1}"] \arrow[r] & 0 \\
0 \arrow[r] & {X_g} \arrow[r, "\partial^X_g", labels=below] & {X_{g-1}} \arrow[r]                        & {\cc_{g-1}(X)}      \arrow[r] & 0          
\end{tikzcd}
\end{center}
Note that both rows are exact, as $g>n=\sup X=\sup P$. Now $\sigma:P\to X$ is a quasi-isomorphism of complexes of Gorenstein projective modules and where $\inf P=\inf X$ is finite.  As $X_g$ is projective  we have by Lemma \ref{qi}(a) that the induced map $\Ext^1_R(\cc_{g-1}(X), X_g)\to \Ext^1_R(\cc_{g-1}(P), X_g)$ is an isomorphism.  Hence, by Lemma \ref{tricky-lemma} we get that the map $\partial_g^X$ splits.  Thus, $\cc_{g-1}(X)$ is isomorphic to a direct summand of $X_{g-1}$, which is Gorenstein projective.  Hence, $\cc_{g-1}(X)$ is Gorenstein projective.  As $g-1\ge n=\sup X$, this implies $g=\Gpd_R X\le g-1$ by Proposition \ref{Gpd-prop}, a contradiction.  

\medskip
Since $\psi:X\to X_{\ge g}$ is a composition of $g-n$ $\sfG$-ghost maps and is nonzero in $\sfD_+(R)$, we have by the Ghost Lemma (Theorem \ref{ghost}) that $\level_R^{\sfG}(X)\ge g-n+1$.  As $\level_R^{\sfG}X=\level_R^{\sfG}M$, this concludes the proof.

\end{proof}

\begin{remark}After a preliminary version of this paper appeared, R. Takahashi pointed out that to the authors that in the case $M$ is a module, an alternative proof of Theorem \ref{main-theorem} can be obtained using \cite[Theorem 1.2]{AT}.
\end{remark}

As an immediate consequence, we have the following generalization of \cite[Proposition 4.5]{JChr} and \cite[Cor 2.2]{AGMSV}:

\begin{corollary} \label{module-case} For a nonzero $R$-module $M$ we have
$$\level_R^{\sfG}M=\Gpd_RM+1.$$  If in addition $R$ is Noetherian and $M$ is finitely generated, we have
$$\level_R^{\widetilde{\sfG}}M=\Gpd_RM+1.$$

\end{corollary}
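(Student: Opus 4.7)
The plan is to derive this statement by combining the upper bound from Corollary \ref{easy-inequality} with the lower bound from Theorem \ref{main-theorem}, applied in the special case where $M$ is a module concentrated in degree zero.

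First, I would dispose of the case $\Gpd_R M = \infty$. In that case, the equality asserts that $\level_R^{\sfG} M$ is also infinite, which follows immediately from Proposition \ref{finite-level}(b): if $\level_R^{\sfG} M < \infty$ then $M \in \sfD_+(R)$ and $\Gpd_R M < \infty$.

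Next, assume $\Gpd_R M < \infty$. Viewing $M$ as a complex concentrated in degree $0$, we have $\sup M = \inf M = 0$ since $M \neq 0$. For the upper bound, Corollary \ref{easy-inequality} immediately gives
\[
\level_R^{\sfG} M \le \Gpd_R M - \inf M + 1 = \Gpd_R M + 1.
\]
For the lower bound, Theorem \ref{main-theorem} gives
\[
\level_R^{\sfG} M \ge \Gpd_R M - \sup M + 1 = \Gpd_R M + 1.
\]
Combining these two inequalities yields the desired equality. For the finitely generated case, the exact same argument works with $\widetilde{\sfG}$ in place of $\sfG$, since both Corollary \ref{easy-inequality} and Theorem \ref{main-theorem} provide the analogous bounds for $\level_R^{\widetilde{\sfG}}$ under the hypothesis that $R$ is Noetherian and $M \in \sfD_+^f(R)$.

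There is no real obstacle here; the work has been done in the preceding results, and this corollary is simply the observation that for a module (where $\sup M = \inf M = 0$), the upper bound from Corollary \ref{easy-inequality} and the lower bound from Theorem \ref{main-theorem} coincide.
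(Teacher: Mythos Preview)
Your proof is correct and follows exactly the paper's approach: the paper's proof is the one-line observation that the equalities follow from Theorem~\ref{main-theorem} and Corollary~\ref{easy-inequality}. Your separate treatment of the case $\Gpd_R M=\infty$ via Proposition~\ref{finite-level}(b) is fine but unnecessary, since the inequality in Theorem~\ref{main-theorem} already forces $\level_R^{\sfG}M=\infty$ in that case.
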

\begin{proof} These statements follow readily from Theorem \ref{main-theorem} and Corollary \ref{easy-inequality}.
\end{proof}

One unresolved question we have is whether an analogous result to Proposition \ref{P-level} holds for Gorenstein projectives.  

\begin{question} Suppose $R$ is Noetherian and $M$ a complex in $\sfD^f_+(R)$.  Is $\level_R^{\sfG} M=\level_R^{\widetilde{\sfG}} M$?  
\end{question}

We can answer this question affirmatively in the case of modules using Corollary \ref{module-case}:

\begin{proposition} Let $M$ be an $R$-module.
\begin{enumerate}[(a)]
\item If $\level_R^{\sfP}M<\infty$ then $\level_R^{\sfP} M=\level_R^{\sfG} M$.
\item If $R$ is Noetherian and $M$ is finitely generated then $\level_R^{\sfG} M= \level_R^{\widetilde{\sfG}} M.$
\item If $R$ is Noetherian and  $\level_R^RM<\infty$, then $\level_R^R M=\level_R^{\widetilde{\sfG}} M.$
\end{enumerate}
\end{proposition}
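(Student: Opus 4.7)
The plan is to chain together Corollary \ref{module-case}, Corollary \ref{level-pd}, Proposition \ref{P-level}, and the standard fact that $\Gpd_R M = \pd_R M$ whenever $\pd_R M$ is finite (see e.g.\ \cite{Chr}).

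I would dispatch (b) first as an essentially immediate consequence of Corollary \ref{module-case}: when $M$ is a nonzero finitely generated module over a Noetherian ring, both $\level_R^{\sfG} M$ and $\level_R^{\widetilde{\sfG}} M$ coincide with $\Gpd_R M + 1$, so they are equal. When $\Gpd_R M = \infty$, both levels are infinite by Proposition \ref{finite-level} and the remark immediately following it.

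For (a) the inclusion $\sfP \subseteq \sfG$ gives the trivial direction $\level_R^{\sfG} M \le \level_R^{\sfP} M$. For the reverse direction I would observe that finiteness of $\level_R^{\sfP} M$ means $M \in \thick_{\sfD(R)}(\sfP)$, so, being a module, $M$ is quasi-isomorphic to a bounded complex of projectives and hence satisfies $\pd_R M < \infty$. Invoking $\Gpd_R M = \pd_R M$, Corollary \ref{level-pd} combined with Corollary \ref{module-case} yields
$$\level_R^{\sfP} M \;\le\; \pd_R M + 1 \;=\; \Gpd_R M + 1 \;=\; \level_R^{\sfG} M,$$
giving the equality.

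Part (c) follows by chaining (a), (b), and Proposition \ref{P-level}. The hypothesis $\level_R^R M < \infty$ forces $M$ to be perfect by Corollary \ref{perfect}(b), hence $M$ is finitely generated with $\pd_R M < \infty$; in particular $\level_R^{\sfP} M \le \pd_R M + 1 < \infty$ by Corollary \ref{level-pd}. Proposition \ref{P-level} identifies $\level_R^R M$ with $\level_R^{\sfP} M$, and parts (a) and (b) then supply $\level_R^{\sfP} M = \level_R^{\sfG} M = \level_R^{\widetilde{\sfG}} M$, chaining to the desired equality. I do not expect any genuine obstacle: the only step beyond direct citation is the routine observation that, for a module, finiteness of $\sfP$-level is equivalent to finiteness of projective dimension.
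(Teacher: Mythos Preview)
Your argument is correct and follows essentially the same route as the paper: reduce to $\pd_R M=\Gpd_R M$ when the projective dimension is finite, then invoke Corollary~\ref{module-case}, and chain with Proposition~\ref{P-level} for part~(c). The only cosmetic difference is that the paper cites \cite[Cor.~2.2]{AGMSV} for the equality $\level_R^{\sfP}M=\pd_R M+1$, whereas you obtain the same conclusion from the trivial inclusion $\sfP\subseteq\sfG$ together with Corollary~\ref{level-pd}; both are fine.
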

\begin{proof}  We may assume $M$ is nonzero.   For the first assertion, we have by \cite[Cor 2.2]{AGMSV} that $\level^{\sfP}_R M=\pd_R M+1$. Thus, $\pd_RM<\infty$.  Then by \cite[Proposition 2.27]{H}, we have $\Gpd_RM=\pd_RM$.  The result now follows from Corollary \ref{module-case}.  The second assertion follows immediately from Corollary \ref{module-case}.  The third statement follows from the first two, along with Proposition \ref{P-level}.
\end{proof}

We obtain the following characterization of Gorenstein local rings:

\begin{corollary} \label{Gor-ring}  Let $R$ be a local Noetherian ring with residue field $k$.  The following conditions are equivalent:
\begin{enumerate}[(a)]
\item $R$ is Gorenstein;
\item $\level_R^{\sfG} k<\infty$;
\item $\level_R^{\sfG} k=\dim R+1$;
\item $\level_R^{\sfG}M\le \dim R +\sup M-\inf M+1$ for all nonzero complexes $M$ in $\sfD_+(R)$.
\item $\level_R^{\widetilde{\sfG}}M\le \dim R +\sup M-\inf M+1$ for all nonzero complexes $M$ in $\sfD^f_+(R)$.
\end{enumerate}
\end{corollary}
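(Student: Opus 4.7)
The plan is to prove the equivalence via the chain \textup{(a)} $\Rightarrow$ \textup{(d)} $\Rightarrow$ \textup{(c)} $\Rightarrow$ \textup{(b)} $\Rightarrow$ \textup{(a)}, and then close a second loop with \textup{(a)} $\Rightarrow$ \textup{(e)} $\Rightarrow$ \textup{(b)}. Essentially all of the real work has already been done in Theorem \ref{Gor-classic}, Corollary \ref{easy-inequality}, Corollary \ref{module-case}, and Proposition \ref{finite-level}; the task here is to assemble these in the correct order.

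For \textup{(a)} $\Rightarrow$ \textup{(d)}, I would start from Theorem \ref{Gor-classic}, which gives $\Gpd_R M \le \sup M + \dim R$ for every nonzero $M$ in $\sfD_+(R)$, and then feed this into the bound $\level_R^{\sfG} M \le \Gpd_R M - \inf M + 1$ of Corollary \ref{easy-inequality} to obtain $\level_R^{\sfG} M \le \dim R + \sup M - \inf M + 1$. When $\sup M = \infty$, Proposition \ref{Gpd-prop} forces $\Gpd_R M = \infty$, hence $\level_R^{\sfG} M = \infty$ by Proposition \ref{finite-level}(b), so the inequality holds trivially with both sides infinite. The analogous argument using the finitely generated variant in Corollary \ref{easy-inequality} gives \textup{(a)} $\Rightarrow$ \textup{(e)}. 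For \textup{(d)} $\Rightarrow$ \textup{(c)}, I would specialize \textup{(d)} to $M = k$ (so $\sup M = \inf M = 0$) to obtain $\level_R^{\sfG} k \le \dim R + 1$; combined with the identity $\level_R^{\sfG} k = \Gpd_R k + 1$ from Corollary \ref{module-case}, this yields $\Gpd_R k \le \dim R$, so in particular $\Gpd_R k < \infty$, and Theorem \ref{Gor-classic} then forces $\Gpd_R k = \dim R$, producing $\level_R^{\sfG} k = \dim R + 1$ as required.

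The implication \textup{(c)} $\Rightarrow$ \textup{(b)} is immediate, and \textup{(b)} $\Rightarrow$ \textup{(a)} follows by invoking Proposition \ref{finite-level}(b) to conclude $\Gpd_R k < \infty$ from the finiteness of $\level_R^{\sfG} k$, and then Theorem \ref{Gor-classic} to deduce that $R$ is Gorenstein. For \textup{(e)} $\Rightarrow$ \textup{(b)}, the key observation is that $\widetilde{\sfG} \subseteq \sfG$, which propagates through the thickening construction to give $\thick^n_{\sfD(R)}(\widetilde{\sfG}) \subseteq \thick^n_{\sfD(R)}(\sfG)$ for every $n$, and hence $\level_R^{\sfG} M \le \level_R^{\widetilde{\sfG}} M$ for all $M$. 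Specializing \textup{(e)} to $M = k$ (which lies in $\sfD^f_+(R)$) gives $\level_R^{\sfG} k \le \level_R^{\widetilde{\sfG}} k \le \dim R + 1 < \infty$, which is \textup{(b)}. I do not anticipate any substantive obstacle here; the only small point that requires care is the degenerate case $\sup M = \infty$ in conditions \textup{(d)} and \textup{(e)}, which, as noted, is handled cleanly by Proposition \ref{finite-level}(b).
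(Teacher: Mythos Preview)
Your proposal is correct and takes essentially the same approach as the paper, which simply cites Theorem~\ref{Gor-classic}, Theorem~\ref{main-theorem}, Corollary~\ref{easy-inequality}, and Corollary~\ref{module-case}; you have spelled out the implications in detail, using Corollary~\ref{module-case} (a direct consequence of Theorem~\ref{main-theorem}) and Proposition~\ref{finite-level}(b) in place of an explicit appeal to Theorem~\ref{main-theorem}. The only superfluous step is your treatment of the case $\sup M=\infty$ in \textup{(d)} and \textup{(e)}: the right-hand side is already $\infty$, so the inequality is vacuous and you need not argue that the left-hand side is infinite as well.
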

\begin{proof}  This follows from Theorem \ref{Gor-classic}, Theorem \ref{main-theorem}, Corollary \ref{easy-inequality} and Corollary \ref{module-case}.
\end{proof}

In \cite{ABIM}, the following upper bound on level with respect to $R$ is proved:

\begin{theorem} \label{ABIM}{\rm (\cite[Theorem 5.5]{ABIM})} Let $R$ be Noetherian and $M$ a nonzero complex in $\sfD^f_b(R)$. Then 
$$\level_R^RM\le \pd_R \hh(M)+1.$$
In particular, if $R$ is regular of finite dimension, then $\level_R^R(M)\le \dim R+1$.
\end{theorem}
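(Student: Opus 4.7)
The proof goes by induction on $p := \pd_R \hh(M)$, where $\hh(M) = \bigoplus_i \hh_i(M)$ is the total homology viewed as a graded $R$-module; the case $p = \infty$ is vacuous. The strategy at each step is to construct an exact triangle
$$L \lra M \lra N \lra \susp L$$
in $\sfD_b^f(R)$ with $\level_R^R L \le 1$ and $\pd_R \hh(N) \le p - 1$, so that Proposition \ref{level-facts}(5) combined with induction yields $\level_R^R M \le 1 + \level_R^R N \le 1 + p$.

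The technical ingredient I would establish first is the following realization lemma: for any finitely generated projective $R$-module $F$, every $R$-linear map $\alpha \colon F \to \hh_i(M)$ is induced on $\hh_i$ by some morphism $\susp^i F \to M$ in $\sfD(R)$. This follows by taking a semi-projective resolution $P \xra{\sim} M$: since $\zz_i(P) \twoheadrightarrow \hh_i(P) = \hh_i(M)$ and $F$ is projective, $\alpha$ lifts to a map $F \to \zz_i(P)$, which in turn defines a chain map $\susp^i F \to P$ realizing $\alpha$ on $\hh_i$.

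For the base case $p = 0$, each $\hh_i(M)$ is a finitely generated projective module. Using the realization lemma, I would lift each identity $\hh_i(M) \to \hh_i(M)$ to a morphism $\susp^i \hh_i(M) \to M$ and sum over the finitely many nonzero indices to obtain a quasi-isomorphism $L := \bigoplus_i \susp^i \hh_i(M) \to M$. Since each $\hh_i(M)$ is a summand of a finitely generated free module, $L \in \thick^1_{\sfD(R)}(R)$, giving $\level_R^R M \le 1$. For the inductive step $p \ge 1$, choose for each $i$ a finitely generated free module $F_i$ and a surjection $\pi_i \colon F_i \twoheadrightarrow \hh_i(M)$ whose kernel $K_i$ satisfies $\pd_R K_i \le p - 1$. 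Realize the collection $\{\pi_i\}$ via the lemma as a morphism $\phi \colon L := \bigoplus_i \susp^i F_i \to M$, where $L$ is a finite direct sum of shifts of copies of $R$, so $L \in \thick^1_{\sfD(R)}(R)$. Complete $\phi$ to a triangle $L \to M \to N \to \susp L$; the long exact homology sequence, using surjectivity of each $\pi_i$, yields $\hh_i(N) = \ker(F_{i-1} \to \hh_{i-1}(M)) = K_{i-1}$, so $\hh(N)$ is (up to one shift) the module $K = \bigoplus_i K_i$ of projective dimension at most $p - 1$. Applying induction to $N \in \sfD_b^f(R)$ finishes the inductive step. The ``In particular'' statement is immediate: over a regular ring of dimension $d$, every finitely generated module has projective dimension at most $d$, whence $\pd_R \hh(M) \le d$.

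The main obstacle is the choice of $L$: we need a single object at level $\le 1$ over $R$ whose map to $M$ surjects on every homology module simultaneously, so that the cone $N$ has homology equal to a coherent first syzygy $K$ rather than a more complicated hybrid. The realization lemma, combined with the sum-across-degrees construction, is precisely what produces such an $L$, after which the long exact homology sequence cleanly identifies $\hh(N)$ with $\susp K$ and the induction runs.
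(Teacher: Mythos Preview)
The paper does not prove this theorem; it is quoted from \cite[Theorem 5.5]{ABIM} without argument and serves only to motivate Example~\ref{eg} and Theorem~\ref{global-bound}. There is therefore no proof in the present paper to compare yours against.

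Your argument is correct and is the natural approach: surject a bounded complex of finitely generated free modules onto $\hh(M)$ degree by degree, realize this via the projectivity of the $F_i$ as a morphism $L\to M$ in $\sfD(R)$, and read off from the long exact homology sequence of the resulting triangle that the cone $N$ has homology equal to the first syzygy of $\hh(M)$, lowering $\pd_R\hh(-)$ by one. Two small points you might make explicit when writing it up: take $F_i=0$ whenever $\hh_i(M)=0$, so that $L$ is genuinely a finite sum of shifts of $R$; and in the inductive step handle separately the possibility $N\simeq 0$, in which case $M\simeq L$ and $\level_R^R M\le 1$ directly.
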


One can ask whether either inequality holds if projective dimension is replaced by Gorenstein projective dimension, and level with respect to $R$ replaced by level with respect to $\widetilde{\sfG}$.  The answer is no, as the following example demonstrates:

\begin{example} \label{eg}  Let $k$ be a field and $R=k[x]/(x^2)$ and $F$ the complex $$0\to R\xrightarrow{x} R\to 0,$$ where the modules $R$ sit in homological degrees 1 and 0.
Note that $\hh(F)$ is finitely generated and nonzero.  Since $R$ is a zero-dimensional Gorenstein ring, we have $\Gpd_R \hh(F)=0$.  
We claim that $\level_R^{\widetilde{\sfG}}(F)=2$.   From Corollary \ref{easy-inequality}, we have that $\level_R^{\widetilde{\sfG}}(F)\le \Gpd_RF+1\le 2$.  Suppose  $\level_R^{\widetilde{\sfG}}(F)\le 1$.  Then by Proposition \ref{finite-level}, $F\simeq T$ in $\sfD(R)$, where $T$ is a bounded complex of finitely generated Gorenstein projective modules with zero differentials.  Since $T$ has zero differentials, we have $T\simeq \hh(T)$ in $\sfD(R)$.   Thus, $F\simeq T\simeq \hh(T)\simeq \hh(F)$ in $\sfD(R)$.  Since $F$ is semi-projective, this means there exists a quasi-isomorphism $\sigma:F\to \hh(F)$.   Let $t=\sigma_1(1)\in \hh(F)_1=xR$.   The induced map on homology  $\sigma^*_1:xR\to xR$ is multiplication by $t$, which is the zero map.  This contradicts that $\sigma_1^*$ is an isomorphism.  
Hence, $\level_R^{\widetilde{\sfG}}(F)=2$.

\end{example}

\begin{remark} The argument in Example \ref{eg} applies to any zero-dimensional Gorenstein local ring which is not a field, with $x$ being any nonzero element of the maximal ideal.
\end{remark}

The following result provides a global bound on the levels of complexes with respect to $\sfG$ over a local Gorenstein ring:

\begin{theorem} \label{global-bound} Let $R$ be a  local Gorenstein ring and $M$ a complex in $\sfD_b(R)$.  Then 
$$\level_R^{\sfG} M\le 2(\dim R + 1).$$
Similarly, if $M$ is a complex in $\sfD^f_b(R)$ then $\level_R^{\widetilde{\sfG}} M\le 2(\dim R+1)$.
\end{theorem}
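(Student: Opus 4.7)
The plan is to split into two cases based on the size of $\sup M - \inf M$. In both cases, Theorem~\ref{Gor-classic} gives $\Gpd_R M \le \sup M + d$ for nonzero $M \in \sfD_b(R)$, so by Proposition~\ref{finite-level} the $\sfG$-level of $M$ is finite. If $\sup M - \inf M \le d + 1$, then Corollary~\ref{easy-inequality} immediately gives
\[
\level_R^{\sfG} M \le \Gpd_R M - \inf M + 1 \le (\sup M - \inf M) + d + 1 \le 2(d+1),
\]
so I would focus on the remaining case.

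For $\sup M - \inf M > d + 1$, I would apply Theorem~\ref{CI-thm} to find a complex $X \simeq M$ in $\sfD_+(R)$ supported in $[\inf M, g]$ with $g := \Gpd_R M$, all of whose terms are projective except for a single Gorenstein projective term $X_n$ at a degree $n$ of my choosing. I would then decompose $X$ via an exact triangle $A \to X \to B \to \susp A$ arising from a hard truncation, with $(n, N)$ arranged so that $A$ is supported in an interval of at most $d+1$ consecutive integers and contains $X_n$ (whence $\level_R^{\sfG} A \le d + 1$ by Corollary~\ref{level-inequality}) and $B$ is a bounded complex of projective $R$-modules. By subadditivity (Proposition~\ref{level-facts}(5)), once I also establish $\level_R^{\sfG} B \le d + 1$, I obtain $\level_R^{\sfG} M \le 2(d+1)$. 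The $\widetilde{\sfG}$-version would follow by the same argument using the ``Moreover'' clause of Theorem~\ref{CI-thm}.

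The main obstacle is the bound on $\level_R^{\sfG} B$. Its homology is essentially $\hh(M)$, which over a non-regular Gorenstein ring can have infinite projective dimension, so Theorem~\ref{ABIM} does not apply directly. I expect to overcome this by exploiting the Gorenstein projective approximation available over a Gorenstein local ring of dimension $d$: every module $N$ fits into a short exact sequence $0 \to K \to G \to N \to 0$ with $G$ Gorenstein projective and $\pd_R K \le d - 1$. Applied to the homology of $B$, this should yield a further triangle $B' \to B \to G' \to \susp B'$ in which $G'$ is a direct sum of shifts of Gorenstein projective modules (so $\level_R^{\sfG} G' \le 1$) and the homology of $B'$ has projective dimension at most $d - 1$ (so $\level_R^R B' \le d$ by Theorem~\ref{ABIM}); combining these gives $\level_R^{\sfG} B \le d + 1$, completing the argument.
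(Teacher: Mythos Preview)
Your Case~1 is correct. In Case~2, however, the decisive step---producing the triangle $B' \to B \to G' \to \susp B'$---is asserted but not carried out, and it is not clear it can be. Having Gorenstein projective approximations $0 \to K_i \to G_i \to \hh_i(B) \to 0$ of the individual homology modules does not by itself yield a morphism between $B$ and $\bigoplus_i \susp^i G_i$ in $\sfD(R)$: one must lift each $G_i \to \hh_i(B)$ to a map $\susp^i G_i \to B$, and the obstruction to such a lift lies in an $\Ext^1$ group of $G_i$ against a boundary module of $B$, which need not have finite projective dimension. Even granting the lifts, the long exact homology sequence of the resulting triangle will mix the $K_i$'s with one another, so it is not immediate that the third vertex has homology of projective dimension at most $d-1$. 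Finally, Theorem~\ref{ABIM} is stated only for $\sfD_b^f(R)$, so even a successful execution of this plan would not establish the first assertion for arbitrary $M\in\sfD_b(R)$.

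The paper sidesteps all of this with a far more elementary decomposition. After replacing $M$ by a genuinely bounded complex, let $Z$ and $B$ denote its subcomplexes of cycles and boundaries; both have zero differentials. The short exact sequence $0\to Z\to M\to \susp B\to 0$ gives a triangle in $\sfD_b(R)$. Each $Z_i$ and $B_i$ is a module over the Gorenstein local ring $R$, hence has $\Gpd\le d$ and $\sfG$-level at most $d+1$ by Corollary~\ref{easy-inequality}; since $Z$ and $\susp B$ are finite direct sums of shifts of such modules, Proposition~\ref{level-facts}(2),(6) gives $\level_R^{\sfG}Z,\ \level_R^{\sfG}\susp B\le d+1$, and Proposition~\ref{level-facts}(5) then yields $\level_R^{\sfG}M\le 2(d+1)$. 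No appeal to Theorem~\ref{CI-thm}, Gorenstein approximations, or Theorem~\ref{ABIM} is needed, and the $\sfD_b(R)$ and $\sfD_b^f(R)$ cases are handled uniformly.
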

\begin{proof}  Let $Z$ and $B$ be the subcomplexes of $M$ consisting of the cycles and boundaries of $M$, respectively.   
If $M$ is in $\sfD_b^f(R)$, we can assume $Z$ and $B$ are finitely generated by replacing $M$, if necessary, with a semi-projective resolution consisting of finitely generated projective modules in each degree.  As $R$ is Gorenstein, $\Gpd_R L\le \dim R$ for every $R$-module $L$ (\cite[Corollary 2.4]{EJX}).  By Corollary \ref{easy-inequality},  $\level_R^{\sfG} L\le \dim R+1$, and if $L$ is finitely generated, $\level_R^{\widetilde{\sfG}}L\le \dim R+1$. Note that $Z$ and $B$ are bounded complexes with zero differentials.  Since level is invariant under direct sums and suspensions, we see that $\level_R^{\sfG} Z$ and $\level_R^{\sfG} B$ are each bounded above by $\dim R+1$; similarly for $\level_R^{\widetilde{\sfG}} Z$ and $\level_R^{\widetilde{\sfG}} B$ in the case $M$ is in $\sfD_b^f(R)$. As the short exact sequence of complexes
$$0\to Z\to M \to \susp B\to 0$$ induces an exact triangle $Z\to M\to \susp B\to \susp Z$ in $\sfD_b(R)$, the theorem follows by part (5) of Proposition \ref{level-facts}.
\end{proof}

\begin{remark} Example \ref{eg} shows that Theorem \ref{global-bound} is sharp for an arbitrary Gorenstein ring of dimension zero.  We do not know if the bound is the best possible in higher dimensions.
\end{remark}

\end{document}